\newtheorem{theorem}{Theorem}[section]
\newtheorem{proposition}[theorem]{Proposition}
\newtheorem{corollary}[theorem]{Corollary}
\newtheorem{lemma}[theorem]{Lemma}
\theoremstyle{definition}
\newtheorem{problem}[theorem]{Problem}
\theoremstyle{remark}
\numberwithin{equation}{section}
\def\ldiv{\backslash}
\def\rdiv{/}
\def\mlt#1{\mathrm{Mlt}(#1)}
\def\inn#1{\mathrm{Inn}(#1)}
\def\aut#1{\mathrm{Aut}(#1)}
\def\A{\mathbf A}
\def\B{\mathbf B}
\def\Q{\mathbf Q}
\def\G{\mathbf \Gamma}
\def\V{\mathbf V}
\def\Z{\mathbb Z}
\def\setof#1#2{\{#1\,|\,#2\}}
\def\genof#1#2{\langle#1\,|\,#2\rangle}
\begin{document}

\title[Latin quandles and Bruck loops]{Enumeration of involutory latin quandles, Bruck loops and commutative automorphic loops of odd prime power order}

\author{Izabella Stuhl}
\address{Department of Mathematics, Penn State University, 54 McAllister Street, University Park, State College, PA 16801, U.S.A.}
\email{ius68@psu.edu}

\author{Petr Vojt\v{e}chovsk\'y}
\address{Department of Mathematics, University of Denver, 2390 S York Street, Denver, Colorado 80208, U.S.A.}
\email{petr@math.du.edu}
\thanks{Petr Vojt\v{e}chovsk\'y partially supported by the University of Denver PROF grant.}

\subjclass[2010]{57M27, 20N05}

\date{October 17, 2017}

\begin{abstract}
There is a one-to-one correspondence between involutory latin quandles and uniquely $2$-divisible Bruck loops. Bruck loops of odd prime power order are centrally nilpotent. Using linear-algebraic approach to central extensions, we enumerate Bruck loops (and hence involutory latin quandles) of order $3^k$ for $k\le 5$, except for those loops that are central extensions of the cyclic group of order $3$ by the elementary abelian group of order $3^4$.

Among the constructed loops there is a Bruck loop of order $3^5$ whose associated $\Gamma$-loop is not a commutative automorphic loop. We independently enumerate commutative automorphic loops of order $3^k$ for $k\le 5$, with the same omission as in the case of Bruck loops.
\end{abstract}

\keywords{Quandle, latin quandle, involutory latin quandle, connected quandle, Bruck loop, Bol loop, $\Gamma$-loop, automorphic loop, commutative automorphic loop.}

\maketitle

\section{Introduction}

Quandles are self-distributive algebras designed for colorings of arcs of oriented knot diagrams \cite{Joyce,Matveev}. The standard axioms of quandles give sufficient (and in some sense necessary) conditions for the arc colorings to be invariant under Reidemeister moves. Quandles also form a class of set-theoretical solutions of the quantum Yang-Baxter equation \cite{Drinfeld, Eisermann}. Of particular interest in both areas are connected quandles, which are quandles whose left translations generate a permutation group that acts transitively on the underlying set \cite{HSV}. Latin quandles form a proper subset of connected quandles and, being quasigroups, they can be investigated not only by methods of quandle theory but also by methods of quasigroup theory \cite{Galkin}. For an introduction to quandles, see \cite{EN}.

Bruck loops (also known as $K$-loops) form a well-studied variety of loops with properties close to abelian groups \cite{Kiechle}. In his seminal work \cite{Glauberman1, Glauberman2}, Glauberman derived many structural results for Bruck loops in which all elements have a finite odd order, and then went on to transfer some of these results to Moufang loops. For instance, he proved that every Bruck loop of odd order and every Moufang loop of odd order are solvable \cite{Glauberman2}. Bruck loops are also important in the study of neardomains \cite{Kiechle} and show up as a natural algebraic structure describing relativistic addition of vectors \cite{Ungar}. For an introduction to loop theory, see \cite{Bruck,Pflugfelder}.

\medskip

There is a one-to-one correspondence between involutory latin quandles and uniquely $2$-divisible Bruck loops (see Theorem \ref{Th:QBCorrespondence}). Therefore, one can construct all involutory latin quandles of a given odd order $n$ by constructing all Bruck loops of order $n$. This observation is of importance for two reasons:

First, while the theory of extensions is not well developed for involutory latin quandles, Glauberman showed \cite{Glauberman1} that any Bruck loop of odd prime power order $p^k$ is centrally nilpotent and hence is a central extension of the $p$-element group $\Z_p$ by some Bruck loop of order $p^{k-1}$. Given a Bruck loop $F$ of order $p^{k-1}$, the vector space of cocycles that yield all central extensions of $\Z_p$ by $F$ as well as the subspace of coboundaries can be calculated by solving a certain system of linear equations over the $p$-element field, cf. Corollary \ref{Cr:LeftBruckCocycle}

Second, any enumeration of involutory latin quandles or Bruck loops of order $p^k$ will invariably require explicit isomorphism checks, given that the isomorphism problem for central extensions is not solved. Every latin quandle is homogeneous, that is, its automorphism group acts transitively on the underlying set, while Bruck loops are certainly not homogeneous. It is therefore possible in principle and in practice to partition a given Bruck loop into nontrivial blocks that are preserved under isomorphisms, thus greatly aiding in isomorphism searches.

Since Bruck loops are precisely the Bol loops satisfying the automorphic inverse property, results on Bol loops are relevant here. Let $p$ be a prime. Bol loops are power associative and hence all Bol loops of order $p$ are groups. Burn showed \cite{Burn} that all Bol loops of order $p^2$ are groups as well. However, there exist nonassociative Bruck loops of order $p^3$. Using central extensions, we construct here all Bruck loops (and hence all involutory latin quandles) of orders $3^k$ for $k\le 4$, and also all Bruck loops of order $3^5$ that are not central extensions of $\Z_3$ by the elementary abelian group of order $3^4$. The results can be found in Theorem \ref{Th:MainEnum} and Table \ref{Tb:3}. Computationally speaking, the task is not difficult for $n=3^k$ with $k\le 4$, but it takes several months of computing time for $n=3^5$, and the excluded case of order $3^5$ is out of reach because the corresponding vector space of cocycles modulo coboundaries has dimension $24$.

\medskip

All left translations of quandles are automorphisms, and so are all left inner mappings of left Bruck loops. Automorphic loops \cite{BP}, which are loops whose inner mappings are automorphisms, are therefore of interest here. Like Bruck loops, commutative automorphic loops of odd prime power order are centrally nilpotent \cite{JKVNilp}. Moreover, there is a one-to-one correspondence between Bruck loops of odd order and the so-called $\Gamma$-loops of odd order \cite{Greer}, a class of loops that contains all commutative automorphic loops of odd order.

It was known that there exist $\Gamma$-loops of odd order that are not commutative automorphic loops \cite{Greer}, but it was not known until now if there exists a $\Gamma$-loop of odd prime power order that is not a commutative automorphic loop. By exhaustively inspecting the Bruck loops obtained in our enumeration, we found several Bruck loops of order $3^5$ whose corresponding $\Gamma$-loops are not commutative automorphic loops. (At the moment we do not understand the abstract reason for the existence of these examples.)

Finite commutative automorphic loops are solvable; see \cite{JKVStruct} for the odd case and \cite{GKN} for the general case. Automorphic loops of odd order are solvable \cite{KKPV}. Let $p$ be a prime. Since automorphic loops are power associative \cite{BP}, automorphic loops of order $p$ are groups. Cs\"org\H{o} showed \cite{Csorgo, KKPV} that all automorphic loops of order $p^2$ are groups as well. There exist commutative automorphic loops of order $8$ that are not centrally nilpotent. Not all automorphic loops of odd order $p^3$ are centrally nilpotent and their classification is open \cite{KKPV}. Commutative automorphic loops of order $p^3$ have been first obtained in \cite{JKVConst} and classified in \cite{DGV}---there are $7$ commutative automorphic loops of order $p^3$, independent of the prime $p$.

Due to the newly discovered examples, the enumeration of commutative automorphic loops of odd prime power order does not coincide with the enumeration of Bruck loops of odd prime power order. The former enumeration can either  be obtained from the latter by constructing all associated $\Gamma$-loops and checking whether their inner mappings are automorphisms, or by using central extensions for commutative automorphic loops, cf. Corollary \ref{Cr:CommutativeAutomorphicCocycle}. We have opted for the second method and enumerated all commutative automorphic loops of orders $3^k$ for $k\le 4$, and also all commutative automorphic loops of order $3^5$ that are not central extensions of $\Z_3$ by the elementary abelian group of order $3^4$. The results can again be found in Theorem \ref{Th:MainEnum} and Table \ref{Tb:3}.

\begin{theorem}\label{Th:MainEnum}
Up to isomorphism, there are $7$ left Bruck loops (equivalently, involutory latin quandles) of order $3^3$, $72$ of order $3^4$, and $118673$ of order $3^5$, excluding central extensions of $\Z_3$ by $\Z_3^4$.

Up to isomorphism, there are $7$ commutative automorphic loops of order $3^3$, $72$ of order $3^4$, and $118405$ of order $3^5$, excluding central extensions of $\Z_3$ by $\Z_3^4$.
\end{theorem}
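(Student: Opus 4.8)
The plan is to prove the two enumerations simultaneously by induction on $k$, using central nilpotence to reduce every loop of order $3^k$ to a central extension of $\Z_3$ by a loop of order $3^{k-1}$ that has already been classified at the previous stage. By Glauberman \cite{Glauberman1} every Bruck loop of order $3^k$ is centrally nilpotent, and by \cite{JKVNilp} so is every commutative automorphic loop of that order; hence each such loop has a central subgroup isomorphic to $\Z_3$, and the quotient by it is again a loop of the same variety, of order $3^{k-1}$. The base of the induction is order $3^2$: since Bol loops of order $p^2$ are groups (Burn \cite{Burn}) and automorphic loops of order $p^2$ are groups (Cs\"org\H{o}), both classifications begin with exactly $\{\Z_9,\Z_3^2\}$. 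I would then climb $3^2\to 3^3\to 3^4\to 3^5$.

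For the inductive step I would, for each loop $F$ of order $3^{k-1}$ on the current list, compute over $\mathbb{F}_3$ the vector space of central-extension cocycles together with its subspace of coboundaries, by solving the linear systems supplied by Corollary \ref{Cr:LeftBruckCocycle} in the Bruck case and Corollary \ref{Cr:CommutativeAutomorphicCocycle} in the commutative automorphic case. Each coset of the coboundary space in the cocycle space determines one central extension of $\Z_3$ by $F$, and by the central nilpotence above every order-$3^k$ loop of the variety arises in this way from at least one base $F$. Enumerating these cosets over all $F$ thus produces a finite, explicitly generated list of candidate loops guaranteed to contain a representative of every isomorphism class.

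The genuine difficulty, and the bulk of the computation, is the reduction of this candidate list modulo isomorphism. Distinct cosets can yield isomorphic loops through an automorphism of $F$, through a change of the transversal, or through an isomorphism that does not respect the chosen central $\Z_3$ at all; because no structural solution of the isomorphism problem for these extensions is available, the reduction must proceed by explicit isomorphism testing. To keep this tractable I would precompute isomorphism-invariant data for each candidate---the isomorphism types of the center and nucleus, the associator subloop, the structure of the multiplication and inner mapping groups, and, for Bruck loops, the commutator subloop---and partition the candidates into blocks that can only contain mutually isomorphic members, exploiting the non-homogeneity of these loops noted in the introduction; pairwise isomorphism searches are then run only within each block. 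This is the step I expect to be the main obstacle, and the reason the order-$3^5$ case consumes months of computing time.

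Finally I would address the single omission and verify consistency. The procedure must be run over every base $F$ of order $3^{k-1}$; the one exclusion, $F=\Z_3^4$ at $k=5$, is forced because the cocycle space modulo coboundaries then has dimension $24$, making enumeration infeasible, and the loops thereby missed are exactly those of order $3^5$ all of whose central $\Z_3$-quotients are elementary abelian. For every remaining base the process terminates and returns the claimed counts: $7$ and $72$ at orders $3^3$ and $3^4$, agreeing in both varieties and with the known classification of commutative automorphic loops of order $p^3$ in \cite{DGV}, and $118673$ Bruck loops against $118405$ commutative automorphic loops at order $3^5$. The discrepancy is explained, through the correspondence of \cite{Greer}, by those Bruck loops whose associated $\Gamma$-loops are not automorphic; I would confirm the two totals independently by passing each enumerated Bruck loop to its $\Gamma$-loop (Theorem \ref{Th:QBCorrespondence} and \cite{Greer}) and testing whether all its inner mappings are automorphisms, which simultaneously validates both columns of the table.
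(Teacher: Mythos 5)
Your overall strategy---induction on $k$ via central nilpotence, solving the linear systems of Corollaries \ref{Cr:LeftBruckCocycle} and \ref{Cr:CommutativeAutomorphicCocycle} over $GF(3)$ for each factor, and reducing the candidate list by invariant-aided isomorphism testing---is essentially the paper's algorithm. However, your handling of the excluded case contains a genuine error that changes the final counts. You claim that the loops missed by omitting the base $F=\Z_3^4$ are ``exactly those of order $3^5$ all of whose central $\Z_3$-quotients are elementary abelian.'' That is indeed what your procedure misses, but it is not what the theorem excludes: the theorem's counts omit every loop that \emph{is} a central extension of $\Z_3$ by $\Z_3^4$, i.e., every loop having \emph{at least one} central subgroup of order $3$ with elementary abelian quotient. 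The two sets differ. For each factor $F=81/i$ with $2\le i\le 47$, the direct product $\Z_3\times F$ is a central extension of $\Z_3$ by $F$, so your enumeration produces it and isomorphism filtering does not remove it (these $46$ loops are pairwise non-isomorphic and not isomorphic to anything else on the list); yet each of them is simultaneously a central extension of $\Z_3$ by $\Z_3^4$, so the theorem excludes it. Consequently your procedure, executed as described, would output $118673+46=118719$ left Bruck loops and $118405+46=118451$ commutative automorphic loops of order $3^5$, contradicting the statement being proved. The fix is a final pass discarding every constructed loop that possesses a central subgroup of order $3$ whose quotient is isomorphic to $\Z_3^4$. The paper achieves this automatically through the ordering convention in its cross-factor deduplication: a loop obtained over the factor $F_i$ is discarded whenever some central quotient of order $3^4$ is isomorphic to an earlier factor $F_t$ with $t<i$, and $\Z_3^4$ is placed first in the list, so anything attributable to it is dropped even though the extensions over $\Z_3^4$ were never computed.

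Two secondary points. First, you enumerate all cosets of the coboundary space and push everything into isomorphism filtering, whereas the paper first reduces $H_\B(F,\Z_3)$ modulo the action of $\aut{F}\times\aut{A}$, which preserves isomorphism type. This matters for feasibility, which is part of a computer-assisted proof: for a factor such as $81/2$ the space $H_\B(F,\Z_3)$ has $3^{11}$ elements but yields only $162$ isomorphism classes, so skipping the orbit reduction inflates the filtering workload by orders of magnitude. Second, your proposed cross-validation of the commutative automorphic counts by converting each Bruck loop to its $\Gamma$-loop (the relevant statement is Theorem \ref{Th:Greer}, not Theorem \ref{Th:QBCorrespondence}) requires, for the excluded-case bookkeeping to match, an argument that Greer's correspondence carries central extensions of $\Z_3$ by $\Z_3^4$ to central extensions of $\Z_3$ by $\Z_3^4$; you do not supply this, and it is precisely why the paper opts for an independent enumeration via Corollary \ref{Cr:CommutativeAutomorphicCocycle} rather than relying on the correspondence.
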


\section{Notation and background material}\label{Sc:Notation}

In this section we gather required definitions and background material. The results presented in this section will be used throughout the paper, often without warning.

Let $(Q,\cdot)$ be a groupoid. For every $x\in Q$, let $L_x:Q\to Q$, $y\mapsto x\cdot y$ be the \emph{left translation by $x$}, and $R_x:Q\to Q$, $y\mapsto y\cdot x$ the \emph{right translation by $x$}.

A \emph{left quasigroup} is a groupoid in which all left translations are bijections. In a left quasigroup, we let $x\ldiv y = L_x^{-1}(y)$ be the left division operation. Similarly, a \emph{right quasigroup} is a groupoid in which all right translations are bijections, and then we denote by $y\rdiv x = R_x^{-1}(y)$ the right division operation. A \emph{quasigroup} is a groupoid in which all translations are bijections.

We will often use juxtaposition in place of the multiplication operation and we introduce the following priority rules for expressions involving multiplications and divisions: juxtaposition is more binding than divisions, which are in turn more binding than multiplication. For instance, $x\cdot yz\ldiv u$ means $x((yz)\ldiv u)$.

A groupoid $Q$ is \emph{left involutory} if $L_x^2=1$ for every $x\in Q$. From the condition $L_x^2=1$ we deduce that $L_x$ is a bijection and $L_x^{-1}=L_x$. In other words, every left involutory groupoid is a left quasigroup satisfying $xy = x\ldiv y$. If $Q$ is a left involutory quasigroup, then $x=(x/y)y = (x/y)\ldiv y$, so $y=(x/y)x$ and $y/x = x/y$.

A \emph{loop} is a quasigroup with an identity element, usually denoted by $e$. In a loop $Q$, we say that an element $x\in Q$ has a \emph{two-sided inverse} if there is $x^{-1}\in Q$ such that $xx^{-1}=x^{-1}x=e$. A loop with two-sided inverses has the \emph{left inverse property} if $x^{-1}(xy)=y$ holds, and the \emph{automorphic inverse property} if $(xy)^{-1}=x^{-1}y^{-1}$ holds. A quasigroup is \emph{power associative} if every element generates a group, \emph{flexible} if $x(yx)=(xy)x$ holds, and \emph{left power alternative} if it is power associative and $x^n(x^my) = x^{n+m}y$ holds for all integers $n$, $m$.

A \emph{left Bol loop} is a loop satisfying the left Bol identity
\begin{equation}\label{Eq:LeftBol}
    x(y(xz)) = (x(yx))z.
\end{equation}
Left Bol loops are power associative and left power alternative (hence have the left inverse property). A \emph{left Bruck loop} is a left Bol loop with the automorphic inverse property.

In a quasigroup $Q$ with inverses, let $P_x=L_{x^{-1}}^{-1}R_x$. A \emph{$\Gamma$-loop} is a commutative loop with automorphic inverse property satisfying $L_xL_{x^{-1}}=L_{x^{-1}}L_x$ and $P_xP_yP_x = P_{P_x(y)}$. Every $\Gamma$-loop is power associative.

For a loop $Q$ let $\mlt{Q}=\genof{L_x,R_x}{x\in Q}$ be the \emph{multiplication group} of $Q$, and $\inn{Q}=\setof{\varphi\in\mlt{Q}}{\varphi(e)=e}$ the \emph{inner mapping group} of $Q$. It is well-known that $\inn{Q}=\genof{T_x,L_{x,y},R_{x,y}}{x,y\in Q}$, where
\begin{displaymath}
    T_x=L_x^{-1}R_x,\ L_{x,y} = L_{yx}^{-1}L_yL_x, \text{ and } R_{x,y} = R_{xy}^{-1}R_yR_x.
\end{displaymath}
The \emph{center} $Z(Q)$ is the set of all elements $x\in Q$ such that $\varphi(x)=x$ for all $\varphi\in\inn{Q}$. A loop $Q$ is \emph{centrally nilpotent} if the sequence
\begin{displaymath}
Q,\ Q/Z(Q),\ (Q/Z(Q))/Z(Q/Z(Q)),\ \dots
\end{displaymath}
terminates at the trivial loop in finitely many steps.

A loop $Q$ is \emph{automorphic} if $\inn{Q}\le\aut{Q}$. Note that a commutative loop $Q$ is automorphic if and only if $L_{x,y}\in\aut{Q}$ for every $x$, $y\in Q$.

A groupoid $Q$ is \emph{uniquely $2$-divisible} if the squaring map $x\mapsto x^2$ is a bijection of $Q$. A finite left Bruck loop is uniquely $2$-divisible if and only if it is of odd order. If $Q$ is a left Bruck loop in which every element has finite odd order, then $\mlt{Q}$ is uniquely $2$-divisible. (There are examples of infinite uniquely $2$-divisible left Bruck loops $Q$ for which $\mlt{Q}$ is not uniquely $2$-divisible.)

Let $(Q_1,\cdot)$, $(Q_2,\circ)$ be groupoids. A triple $(\alpha,\beta,\gamma)$ of bijections $Q_1\to Q_2$ is an \emph{isotopism} from $(Q_1,\cdot)$ onto $(Q_2,\circ)$ if $\alpha(x)\circ\beta(y)=\gamma(x\cdot y)$ for every $x$, $y\in Q_1$, in which case we say that the two groupoids are \emph{isotopic}. A groupoid isotopic to a quasigroup is itself a quasigroup. If $Q_1=Q_2$ and the above isotopism of quasigroups has the form $(R_a,L_b,1)$ for some $a$, $b\in Q_1$, then $(Q_2,\circ)$ is a loop with identity element $b\cdot a$.

A \emph{quandle}\footnote{It would make a lot of sense to call quandles \emph{left quandles} but traditionally the chirality of the quandle is suppressed in its name.} is a left quasigroup that is left distributive (that is, the identity $x(yz)=(xy)(xz)$ holds) and idempotent (that is, the identity $xx=x$ holds). Equivalently, a quandle is an idempotent groupoid $Q$ such that $L_x\in\aut{Q}$ for every $x\in Q$. A quandle is \emph{latin} if it is a quasigroup. We say that a quandle is \emph{involutory} if it is left involutory. Every quandle is flexible and, being idempotent, uniquely $2$-divisible.

The varieties of commutative automorphic loops, left Bruck loops, $\Gamma$-loops and quandles will be denoted by $\A$, $\B$, $\G$, $\Q$, respectively.

\section{Two correspondences}

The following correspondence is well known and likely first appeared in D.A. Robinson's 1964 dissertation. Kikkawa \cite{Kikkawa} published it in 1973, with uniquely $2$-divisible left Bruck loops replaced by ``left diassociative loops satisfying $x(y^2z)=(xy)^2(x^{-1}z)$ in which $x\mapsto x^2$ is a bijection.'' Robinson eventually published it in 1979 \cite{Robinson}, using the terminology of Bruck loops and right distributive right symmetric quasigroups. The correspondence is also alluded to in the recent survey of Stanovsk\'y \cite{Stanovsky}. We give a short but detailed proof that relies only on standard properties of left Bruck loops, summarized in Section \ref{Sc:Notation}.

\begin{theorem}[Kikkawa, Robinson]\label{Th:QBCorrespondence}
Let $Q$ be a set and let $e\in Q$. There is a one-to-one correspondence between involutory latin quandles defined on $Q$ and uniquely $2$-divisible left Bruck loops defined on $Q$ with identity element $e$. In more detail:
\begin{enumerate}
\item[(i)] If $(Q,\cdot)$ is an involutory latin quandle then
\begin{displaymath}
    F_{\Q\to\B}(Q,\cdot)=(Q,+)\text{ defined by }x+y = (x\rdiv e)(e\ldiv y) = (x\rdiv e)(ey)
\end{displaymath}
is a uniquely $2$-divisible left Bruck loop with identity element $e$. Moreover, in $(Q,+)$ we have $-x=ex$, $2x = x+x = xe$ and $x/2 = x/e$.
\item[(ii)] If $(Q,+)$ is a uniquely $2$-divisible left Bruck loop with identity element $e$ then
\begin{displaymath}
    F_{\B\to\Q}(Q,+)=(Q,\cdot)\text{ defined by }xy = (x+x)-y = 2x-y
\end{displaymath}
is an involutory latin quandle.
\item[(iii)] The mappings of \emph{(i)} and \emph{(ii)} are mutual inverses, that is,
\begin{displaymath}
    F_{\Q\to\B}(F_{\B\to\Q}(Q,+)) = (Q,+)
\end{displaymath}
for any uniquely $2$-divisible left Bruck loop $(Q,+)$ with identity element $e$, and
\begin{displaymath}
    F_{\B\to\Q}(F_{\Q\to\B}(Q,\cdot)) = (Q,\cdot)
\end{displaymath}
for any involutory latin quandle $(Q,\cdot)$.
\end{enumerate}
\end{theorem}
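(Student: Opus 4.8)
The plan is to verify the three claims in turn, treating (i) and (ii) as the constructions that produce objects in the target variety, and (iii) as the bookkeeping that the two constructions invert one another. For (i), starting from an involutory latin quandle $(Q,\cdot)$, I would first confirm that the isotopism viewpoint from Section \ref{Sc:Notation} applies: the operation $x+y=(x\rdiv e)(e\ldiv y)$ is obtained from $(Q,\cdot)$ via the triple $(R_e^{-1},L_e^{-1},1)=(R_e^{-1},L_e,1)$, since in a left involutory quasigroup $e\ldiv y = L_e^{-1}(y)=L_e(y)=ey$. By the cited fact that an isotopism of the form $(R_a,L_b,1)$ produces a loop with identity $b\cdot a$, the operation $+$ is a loop operation; I should check that the identity element is indeed $e$, which amounts to computing $(e\rdiv e)(e\ldiv e)=e\cdot e=e$ using idempotence. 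I would then read off $-x$, $2x$, and $x/2$ directly from the definition using idempotence, $L_x^2=1$, and the identity $y/x=x/y$ established for left involutory quasigroups.

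The genuine content of (i) is proving that $(Q,+)$ is a \emph{left Bruck} loop and uniquely $2$-divisible. For unique $2$-divisibility I would use that every quandle is uniquely $2$-divisible (stated in Section \ref{Sc:Notation}) and translate the squaring map of $(Q,+)$ into quandle terms via $2x=xe$. For the Bol and automorphic-inverse identities, the strategy is to rewrite the left Bol identity \eqref{Eq:LeftBol} and the automorphic inverse property for $+$ entirely in terms of $\cdot$, then discharge them using left distributivity, idempotence, and $L_x^2=1$. This is the step I expect to be the main obstacle: the Bol identity involves a depth-three nesting, so after substituting $x+y=(x\rdiv e)(ey)$ one faces a sizeable term that must be collapsed using self-distributivity of $\cdot$ repeatedly, and organizing this computation cleanly — rather than by brute force — is where the proof earns its keep.

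For (ii), beginning with a uniquely $2$-divisible left Bruck loop $(Q,+)$ and $xy=2x-y$, I must show $(Q,\cdot)$ is an involutory latin quandle. Idempotence $xx=2x-x=x$ is immediate. Left division being bijective, and the quasigroup property, follow from unique $2$-divisibility together with the fact that $L_x$ and $R_x$ in $(Q,+)$ are bijections; concretely $L_x$ for $\cdot$ is the composite $z\mapsto 2x-z$, a bijection as a translation in $(Q,+)$. The involutory condition $L_x^2=1$ reduces to $2x-(2x-y)=y$, which is the left inverse property of the Bruck loop applied to the inverse/negation, so I would invoke the left inverse property and automorphic inverse property here. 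Left distributivity $x(yz)=(xy)(xz)$ is the one identity with real substance; rewritten it becomes a statement purely about $+$ and negation that I would verify using the left Bol identity and the automorphic inverse property, again the most delicate computation on this side.

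Finally, for (iii) the plan is purely computational substitution. For $F_{\Q\to\B}(F_{\B\to\Q}(Q,+))=(Q,+)$ I would start from $xy=2x-y$, compute the induced $x\oplus y=(x\rdiv e)(ey)$ inside $F_{\B\to\Q}(Q,+)$, and simplify using the formulas for $e\ldiv y$ and $x\rdiv e$ in terms of $+$ together with the already-derived identities $2x=xe$, $-x=ex$, reducing $x\oplus y$ back to $x+y$. The reverse composite is handled symmetrically: starting from an involutory latin quandle, form $+$ as in (i), then reconstruct $\cdot$ via $xy=2x-y=(x+x)-y$ and check it returns the original quandle multiplication, using $2x=xe$ and $-y=ey$ so that $(x+x)-y$ unwinds to $xy$. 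These two verifications are routine once (i) and (ii) supply the translation dictionary between $\cdot$ and $+$, so I would present them briefly.
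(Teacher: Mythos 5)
Your overall strategy is the same as the paper's: treat $(Q,+)$ as the principal isotope of the quandle via $(R_e,L_e,1)$, verify the loop identities for $+$ by direct quandle computations, verify the quandle identities for $\cdot$ by direct Bruck-loop computations, and dispatch (iii) by substitution. The problem is that, as written, you defer precisely the two verifications that carry the entire content of the theorem --- the left Bol identity for $(Q,+)$ in (i), and left self-distributivity of $xy=2x-y$ in (ii) --- with only a promise that they can be ``collapsed'' or ``discharged.'' Calling these steps ``the main obstacle'' and ``where the proof earns its keep'' is an accurate diagnosis, but it means they are not actually carried out, so the proposal is an outline rather than a proof.

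The gap is most concrete in (ii). Follow your own plan: distributivity is equivalent to $2x-(2y-z) = 2(2x-y)-(2x-z)$, which by the automorphic inverse property becomes $2x+(-2y+z) = 2(2x-y)+(-2x+z)$; substituting $x$ for $2x$ (legitimate by unique $2$-divisibility), $-y$ for $y$ and $x+z$ for $z$, and then applying \eqref{Eq:LeftBol}, reduces everything to $(x+(2y+x))+z = 2(x+y)+z$. At this point you need the identity $x+(2y+x)=2(x+y)$, i.e., Glauberman's Lemma 1 for left Bruck loops. This is a genuine lemma, not an instance of Bol or AIP obtained by rewriting: the paper has to prove it separately, via $x+y = x+(2y+(x+(-x-y))) = (x+(2y+x))-(x+y)$ compared with $2(x+y)-(x+y)=x+y$, followed by cancellation. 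Your declared toolkit (``the left Bol identity and the automorphic inverse property'') does not contain this ingredient, and without it the verification stalls at the last step. On the (i) side, the analogous missing content is the organizing device the paper uses for the Bol identity: parametrize the three arguments as $xe$, $ye$, $ez$ (allowed since $R_e$, $L_e$ are bijections), so that each occurrence of $+$ collapses through $xe+v=x(ev)$ before distributivity and involutivity are applied; your proposal anticipates that some such organization is needed but does not supply one. Two smaller slips: unique $2$-divisibility of $(Q,+)$ follows from $2x=xe$ together with bijectivity of the quandle translation $R_e$ (latin-ness), not from the quandle's own unique $2$-divisibility, which is trivial (its squaring map is the identity) and irrelevant; and even the preliminary facts $2x=xe$ and $-x=ex$ require left distributivity, not merely idempotence, $L_x^2=1$ and $y/x=x/y$ as you claim.
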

\begin{proof}
(i) Let $(Q,\cdot,\ldiv,/)$ be an involutory latin quandle and let $(Q,+)=F_{\Q\to\B}(Q,\cdot)$. Then $(R_e,L_e,1)$ is an isotopism from $(Q,\cdot)$ onto $(Q,+)$ and hence $(Q,+)$ is a loop with identity element $ee=e$. Since $x+ex = (x/e)x = (e/x)x = e$ and $ex+x = e(x/e)e+x = e(x/e)\cdot ex = e(x/e\cdot x) = ee = e$, the two-sided inverse of $x$ in $(Q,+)$ is $-x=ex$. The automorphic inverse property then follows from $-(x+y) = e(x+y) = e(x/e\cdot e\ldiv y) = e(x/e)\cdot y = (ex)/e\cdot y = ex+ey =(-x)+(-y)$. For the left Bol identity, we calculate $xe+(ye+(xe+ez)) = x\cdot e(y\cdot e(xz)) = x(ey\cdot xz) = x(ey)\cdot z = (x(ey)\cdot e)/e\cdot z = (x(ey\cdot xe))/e\cdot z = (x\cdot e(y\cdot e(xe)))/e\cdot z = (xe+(ye+xe))+ez$. Note that $x+x = (x/e)(ex) = (x/e\cdot e)(x/e\cdot x) = xe$. Since $R_e$ is a bijection of $Q$, we see that $(Q,+)$ is uniquely $2$-divisible and $x/2 = x/e$.

(ii) Let $(Q,+)$ be a uniquely $2$-divisible left Bruck loop with identity element $e$ and let $(Q,\cdot) = F_{\B\to\Q}(Q,+)$. Then $(x\mapsto 2x,\,x\mapsto -x,\,1)$ is an isotopism from $(Q,\cdot)$ onto $(Q,+)$ and hence $(Q,\cdot,\ldiv,\rdiv)$ is a quasigroup. We have $xx = 2x-x=x$ by power-associativity, and $x(xy) = 2x-(2x-y) = 2x+(-2x+y) = y$ by the automorphic inverse and left inverse properties. The desired identity $x(yz) = (xy)(xz)$ is equivalent to $2x-(2y-z) = 2(2x-y)-(2x-z)$ and hence to $2x+(-2y+z) = 2(2x-y)+(-2x+z)$. Substituting $x$ for $2x$, $-y$ for $y$, and $x+z$ for $z$ yields $x+(2y+(x+z)) = 2(x+y)+z$, which can be rewritten as $(x+(2y+x))+z = 2(x+y)+z$ by the left Bol identity. We are done by the key identity $x+(2y+x) = 2(x+y)$ for left Bruck loops. (See \cite[Lemma 1]{Glauberman1} or note that the identity follows from $2(x+y) - (x+y) = x+y = x+(2y+(x+(-x-y))) = (x+(2y+x))+(-x-y) = (x+(2y+x))-(x+y)$ upon canceling $x+y$.)

(iii) Let $(Q,+)$ be a uniquely $2$-divisible left Bruck loop with identity element $e$, $(Q,\cdot,\ldiv,/) = F_{\B\to\Q}(Q,+)$ and $(Q,\circ) = F_{\Q\to\B}(Q,\cdot)$. Note that $-y = (e+e)-y = ey$, so $x\circ y = x/e\cdot ey = (x/e + x/e)-ey = (x/e + x/e) + y$. In particular, $x = x\circ e = (x/e+x/e)+e = x/e+x/e$, and we see that $x\circ y = x+y$.

Conversely, let $(Q,\cdot,\ldiv,/)$ be an involutory latin quandle, $(Q,+)=F_{\Q\to\B}(Q,\cdot)$ and $(Q,*) = F_{\B\to\Q}(Q,+)$. In (i) we showed $x+x=xe$ and $-x=ex$, hence $x*y = (x+x)-y = xe-y = (xe)/e\cdot e(-y) = xy$.
\end{proof}

Note that the correspondence of Theorem \ref{Th:QBCorrespondence} is vacuous when $|Q|$ is even.

The following correspondence was proved in \cite{Greer}:

\begin{theorem}[Greer]\label{Th:Greer}
There is a one-to-one correspondence between left Bruck loops of odd order $n$ and $\Gamma$-loops of odd order $n$. In more detail:
\begin{enumerate}
\item[(i)] If $(Q,+)$ is a left Bruck loop of odd order $n$ with identity element $e$ then
\begin{displaymath}
    F_{\B\to\G}(Q,+) = (Q,\cdot)\text{ defined by }x\cdot y = (L_xL_yL_x^{-1}L_y^{-1})^{1/2}L_yL_x(e)
\end{displaymath}
is a $\Gamma$-loop of order $n$. Here, $L_x(y)=x+y$.
\item[(ii)] If $(Q,\cdot)$ is a $\Gamma$-loop of odd order $n$ then
\begin{displaymath}
    F_{\G\to\B}(Q,\cdot) = (Q,+)\text{ defined by }x+y = (x^{-1}\ldiv (y^2x))^{1/2}
\end{displaymath}
is a left Bruck loop of order $n$.
\item[(iii)] The mappings of \emph{(i)} and \emph{(ii)} are mutual inverses, that is,
\begin{displaymath}
    F_{\B\to\G}(F_{\G\to\B}(Q,\cdot)) = (Q,\cdot)
\end{displaymath}
for any $\Gamma$-loop $(Q,\cdot)$ of odd order, and
\begin{displaymath}
    F_{\G\to\B}(F_{\B\to\G}(Q,+)) = (Q,+)
\end{displaymath}
for any left Bruck loop $(Q,+)$ of odd order.
\end{enumerate}
\end{theorem}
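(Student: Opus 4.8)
The plan is to organize both constructions around the observation that the half-powers in the two formulas become transparent once we pass to the language of twisted subgroups of uniquely $2$-divisible groups. Recall that a \emph{twisted subgroup} of a group $G$ is a subset $T$ with $1\in T$, $T^{-1}=T$, and $aba\in T$ whenever $a,b\in T$. The left Bol identity \eqref{Eq:LeftBol}, rewritten as $L^{+}_xL^{+}_yL^{+}_x=L^{+}_{x+(y+x)}$, says precisely that $\setof{L^{+}_x}{x\in Q}$ is a twisted subgroup of $\mlt{(Q,+)}$, while the axiom $P_xP_yP_x=P_{P_x(y)}$ says precisely that $\setof{P_x}{x\in Q}$ is a twisted subgroup of $\genof{P_x}{x\in Q}$ (one checks $P_e=1$ and $P_x^{-1}=P_{x^{-1}}$ from commutativity and $L_xL_{x^{-1}}=L_{x^{-1}}L_x$). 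Since the loops have odd order, the square roots demanded by the two formulas exist and are unique: in $\mlt{(Q,+)}$ because that group is uniquely $2$-divisible (Section \ref{Sc:Notation}), and in the $\Gamma$-loop because it is power associative of odd order. The conceptual heart of the whole correspondence is a single bridging lemma, which I would isolate and prove first: the squaring map $s$ of the $\Gamma$-loop $(Q,\cdot)$ conjugates its left translations to its $P$-maps, $P_x=sL^{+}_xs^{-1}$, where $L^{+}_x$ denotes the \emph{Bruck} left translation on the same set.

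For part (ii) this relation is almost visible. Rewriting $x+y=(x^{-1}\ldiv(y^2x))^{1/2}$ as $(P_x(y^2))^{1/2}$, we read off immediately that the Bruck left translation $L^{+}_x\colon y\mapsto x+y$ equals $s^{-1}P_xs$. After checking that $(Q,+)$ is a loop with identity $e$ and that its $+$-inverse of $x$ is the $\cdot$-inverse $x^{-1}$ (so the automorphic inverse property transfers from $(Q,\cdot)$), the left Bol identity follows cleanly: conjugating the $\Gamma$-axiom gives $L^{+}_xL^{+}_yL^{+}_x=s^{-1}P_xP_yP_xs=L^{+}_{P_x(y)}$, and evaluating $P_xP_yP_x=P_{P_x(y)}$ at $e$, together with the identity $P_w(e)=w^2$, yields $P_x(y)=x+(y+x)$. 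Hence $L^{+}_xL^{+}_yL^{+}_x=L^{+}_{x+(y+x)}$, which is exactly the Bol law.

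For part (i) I would run the same argument in reverse, starting from a Bruck loop $(Q,+)$ and the product $x\cdot y=(L^{+}_xL^{+}_yL^{+}_x{}^{-1}L^{+}_y{}^{-1})^{1/2}L^{+}_yL^{+}_x(e)$, read as applying the unique square root in $\mlt{(Q,+)}$ of the commutator of $L^{+}_x$ and $L^{+}_y$ to the point $y+x$. First I would dispatch the easy axioms: the identity is $e$ (the commutator collapses), and commutativity reduces, after cancelling the square root, to $[L^{+}_x,L^{+}_y](y+x)=x+y$, a four-step computation using only the left inverse property. The automorphic inverse property and $L^{\cdot}_xL^{\cdot}_{x^{-1}}=L^{\cdot}_{x^{-1}}L^{\cdot}_x$ follow once the $\cdot$-inverse of $x$ is identified with $-x$. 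Here the squaring map is easy to pin down, since $x\cdot x=2x$, so $s$ is just Bruck doubling. The substantial axiom $P_xP_yP_x=P_{P_x(y)}$ is then obtained exactly as in part (ii): once the bridging lemma $P_x=sL^{+}_xs^{-1}$ is established, conjugating the twisted-subgroup law of $\setof{L^{+}_x}{x\in Q}$ by $s$ gives $P_xP_yP_x=P_{x+(y+x)}$, and the $e$-evaluation again identifies the subscript as $P_x(y)$. Part (iii) is then essentially formal: the relation $P_x=sL^{+}_xs^{-1}$ shows that each composite of the two constructions fixes every left translation, and since the left translations determine the loop, each composite is the identity; uniqueness of square roots guarantees that no extraneous solutions intrude.

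I expect the main obstacle to be proving the bridging lemma $P_x=sL^{+}_xs^{-1}$ in the direction needed for part (i), that is, computing the $\Gamma$-loop left translations $L^{\cdot}_x$, and hence the $P$-maps $P_x=(L^{\cdot}_{x^{-1}})^{-1}L^{\cdot}_x$, directly from the defining formula for $\cdot$. The difficulty is genuine: the half-power $(L^{+}_xL^{+}_yL^{+}_x{}^{-1}L^{+}_y{}^{-1})^{1/2}$ lives in the ambient group $\mlt{(Q,+)}$, whereas the identity to be proved is about the loop operation, so the argument must repeatedly use that in a uniquely $2$-divisible group a square root is fixed by every automorphism fixing its square and is carried by conjugation to the square root of the conjugate. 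Once the bridging lemma is in hand, both the $\Gamma$-loop axiom in part (i) and the Bol identity in part (ii) become formal consequences of the twisted-subgroup laws, and the remaining verifications are the short computations indicated above.
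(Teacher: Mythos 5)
A preliminary remark on the comparison itself: the paper contains no proof of Theorem \ref{Th:Greer} --- it is imported verbatim from \cite{Greer} (``The following correspondence was proved in \cite{Greer}'') --- so there is no in-paper argument to measure your proposal against. What you propose is, in outline, the strategy of Greer's published proof: work in uniquely $2$-divisible groups, observe that the left translations of a left Bruck loop and the $P$-maps of a $\Gamma$-loop form twisted subgroups, and tie the two structures together by conjugation by the squaring map. Several of your computations are correct and efficient: the rewriting $x+y=(P_x(y^2))^{1/2}$, hence $L^{+}_x=s^{-1}P_xs$; the derivation of left Bol from $P_xP_yP_x=P_{P_x(y)}$ by conjugating and then evaluating at $e$ to identify $P_x(y)=x+(y+x)$; and the four-step left-inverse-property verification that $[L^{+}_x,L^{+}_y](y+x)=x+y$, which gives commutativity in part (i).

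Nevertheless the proposal has genuine gaps, and they sit exactly where the difficulty of the theorem is concentrated. First, in part (i) everything of substance --- power associativity of $(Q,\cdot)$, the axiom $P_xP_yP_x=P_{P_x(y)}$, and through it part (iii) --- is made to rest on the bridging lemma $P_x=sL^{+}_xs^{-1}$, which you explicitly leave unproved (``the main obstacle''); establishing it requires computing $L^{\cdot}_x$, and hence $(L^{\cdot}_{x^{-1}})^{-1}L^{\cdot}_x$, from the commutator--square-root formula, and that computation is the bulk of Greer's argument. A proof that defers this step is a plan, not a proof. Second, in both directions you dismiss the quasigroup axioms as ``checking,'' but they are not routine: in part (ii) the right translation of $(Q,+)$ is $y\mapsto s^{-1}(P_y(x^2))$, where the unknown $y$ occurs \emph{inside} the permutation $P_y$, so its bijectivity cannot be read off by cancellation; the standard resolution solves equations of the form $awa=v$ by square-root manipulations inside the group generated by the $P$-maps, and therefore needs $2$-divisibility properties of that group, which you never establish --- the unique $2$-divisibility of $\mlt{Q}$ is quoted in Section \ref{Sc:Notation} only for left Bruck loops, and the analogous fact on the $\Gamma$-loop side is part of what must be proved. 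The same issue affects quasigroup-ness of $(Q,\cdot)$ in part (i). Third, part (iii) is not ``essentially formal'': the direction $F_{\B\to\G}(F_{\G\to\B}(Q,\cdot))=(Q,\cdot)$ requires showing that the $\Gamma$-product is recovered from its own $P$-maps by the commutator--square-root formula, a nontrivial identity in $\Gamma$-loops; your conjugation argument delivers it only after one knows that the relevant square roots in the enveloping group exist, are unique, and are transported correctly under conjugation --- which is again the unestablished $2$-divisibility. In short: right skeleton, matching the cited source, but the load-bearing lemmas are asserted rather than proved.
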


\section{Central extensions of loops}\label{Sc:CentralExtensions}

In this section we briefly review the theory of central extensions for loops. Most of the material is well known. We were not able to find Proposition \ref{Pr:CoboundariesAlwaysVCocycles} in the literature; it is a very simple and very useful observation.

Throughout this section, let $F=(F,\cdot,\ldiv,\rdiv,1)$ be a loop and $A=(A,+,0)$ an abelian group.

A loop $Q$ is a \emph{central extension} of $A$ by $F$ if there is a subloop $Z\le Z(Q)$ isomorphic to $A$ such that $Q/Z$ is isomorphic to $F$.

A mapping $\theta:F\times F\to A$ is called a \emph{cocycle}. Given a cocycle $\theta$, define $Q(F,A,\theta)$ on $F\times A$ by
\begin{equation}\label{Eq:Ext}
    (x,a)(y,b) = (xy,\,a+b+\theta(x,y)).
\end{equation}
The resulting groupoid is a quasigroup with $(x,a)\ldiv (y,b) = (x\ldiv y, b-a-\theta(x,x\ldiv y))$ and $(x,a)/(y,b) = (x/y,a-b-\theta(x/y,y))$.

If a cocycle is of the form $\hat\tau(x,y) = \tau(xy)-\tau(x)-\tau(y)$ for some mapping $\tau:F\to A$, then it is called a \emph{coboundary}.

The quasigroup $Q(F,A,\theta)$ is a loop if and only if there is an $a\in A$ such that $\theta(x,1)=\theta(1,x)=-a$ for every $x\in F$, in which case the identity element of $Q(F,A,\theta)$ is $(1,a)$.

\begin{proposition}\label{Pr:CoboundaryIso}
Suppose that $A$ is an abelian group, $F$ a loop, $\theta:F\times F\to A$ a cocycle and $\hat\tau:F\times F\to A$ a coboundary. Then $Q(F,A,\theta)$ is isomorphic to $Q(F,A,\theta+\hat\tau)$.
\end{proposition}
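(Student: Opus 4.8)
The plan is to exhibit an explicit isomorphism between the two extensions, obtained by translating the second coordinate by the very map $\tau$ that produces the coboundary. Since $\hat\tau$ is a coboundary, I would fix a mapping $\tau\colon F\to A$ with $\hat\tau(x,y)=\tau(xy)-\tau(x)-\tau(y)$ for all $x,y\in F$, and then define
\begin{displaymath}
    \Phi\colon Q(F,A,\theta)\to Q(F,A,\theta+\hat\tau),\qquad \Phi(x,a)=(x,\,a+\tau(x)).
\end{displaymath}

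First I would check that $\Phi$ is a bijection. This is immediate: $\Phi$ fixes the first coordinate, and on each fiber $\{x\}\times A$ it acts on the second coordinate by the translation $a\mapsto a+\tau(x)$, which is a bijection of $A$. Hence $\Phi$ is a bijection of $F\times A$, with inverse $(x,c)\mapsto(x,\,c-\tau(x))$.

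Next I would verify that $\Phi$ is a homomorphism, which is the only computational content of the argument. Applying $\Phi$ to the product $(x,a)(y,b)=(xy,\,a+b+\theta(x,y))$ computed in $Q(F,A,\theta)$ gives an element of $Q(F,A,\theta+\hat\tau)$ whose second coordinate is $a+b+\theta(x,y)+\tau(xy)$. Computing instead $\Phi(x,a)\cdot\Phi(y,b)$ directly in $Q(F,A,\theta+\hat\tau)$ via \eqref{Eq:Ext} yields second coordinate $(a+\tau(x))+(b+\tau(y))+\theta(x,y)+\hat\tau(x,y)$. Substituting $\hat\tau(x,y)=\tau(xy)-\tau(x)-\tau(y)$ causes the terms $\tau(x)$ and $\tau(y)$ to cancel, leaving precisely $a+b+\theta(x,y)+\tau(xy)$, while the first coordinates agree trivially. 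Thus $\Phi\big((x,a)(y,b)\big)=\Phi(x,a)\,\Phi(y,b)$.

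There is no genuine obstacle here: the only thing to get right is the direction of the translation, chosen so that the coboundary telescopes, and commutativity of $A$ licenses the reordering of summands in the homomorphism check. Once $\Phi$ is seen to be a bijective multiplicative homomorphism, it is automatically a quasigroup isomorphism (a bijection preserving multiplication preserves both divisions, since $x\ldiv y$ and $x/y$ are determined by multiplication), completing the proof.
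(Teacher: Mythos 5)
Your proof is correct and takes essentially the same approach as the paper: the paper's proof consists of exhibiting exactly the same bijection $(x,a)\mapsto(x,a+\tau(x))$, leaving the routine verification implicit, which you have simply written out in full. The telescoping computation and the remark that a bijective multiplicative homomorphism between quasigroups preserves the divisions are both accurate.
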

\begin{proof}
Consider the bijection $Q(F,A,\theta)\to Q(F,A,\theta+\hat\tau)$ given by $(x,a)\mapsto (x,a+\tau(x))$.
\end{proof}

In particular, if $Q(F,A,\theta)$ is a loop with identity element $(1,a)$ and $\tau:F\to A$ is any mapping satisfying $\tau(1)=-a$, then $Q(F,A,\theta)$ is isomorphic to the loop $Q(F,A,\theta+\hat\tau)$ with identity element $(1,0)$. We can therefore assume without loss of generality that every cocycle $\theta:F\times F\to A$ satisfies
\begin{equation}\label{Eq:LoopCocycle}
    \theta(x,1)=\theta(1,x)=0
\end{equation}
for every $x\in F$, in which case we call $\theta$ a \emph{loop cocycle}. We note that a coboundary $\hat\tau:F\times F\to A$ is a loop cocycle if and only if it satisfies $\tau(1)=0$, in which case we call it a \emph{loop coboundary}.

Loop cocycles $F\times F\to A$ form a vector space $C(F,A)$ under pointwise addition, and loop coboundaries form a subspace $B(F,A)$ of $C(F,A)$. We have shown that up to isomorphism it suffices to consider representative loop cocycles from the factor space $H(F,A) = C(F,A)/B(F,A)$. In fact, we obtain precisely all central extensions of $A$ by $F$ in this way (see \cite[Theorem 6]{NV64} for a proof):

\begin{theorem}
Let $F$ be a loop and $A$ an abelian group. Then a loop is a central extension of $A$ by $F$ if and only if it is isomorphic to $Q(F,A,\theta)$ for some $\theta\in H(F,A)$.
\end{theorem}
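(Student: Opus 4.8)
The plan is to prove the two implications separately: sufficiency (every $Q(F,A,\theta)$ with $\theta\in H(F,A)$ is a central extension) is a direct verification, whereas necessity (every central extension arises this way) requires manufacturing a cocycle from a chosen section of $Z$ in $Q$.

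For sufficiency, fix a loop cocycle representative $\theta$ of the given class, so that $Q=Q(F,A,\theta)$ is a loop with identity $(1,0)$ by \eqref{Eq:LoopCocycle}. I would set $Z=\setof{(1,a)}{a\in A}$. Using $\theta(1,1)=0$, the multiplication \eqref{Eq:Ext} restricts to $(1,a)(1,b)=(1,a+b)$ on $Z$, so $a\mapsto(1,a)$ is an isomorphism $A\to Z$. To see $Z\le Z(Q)$, I would check directly from \eqref{Eq:Ext} and \eqref{Eq:LoopCocycle} that each generator $T_x$, $L_{x,y}$, $R_{x,y}$ of $\inn{Q}$ fixes $(1,c)$; concretely, the defining relations $(1,c)q=q(1,c)$, $q'((1,c)q)=(q'(1,c))q$ and $(q(1,c))q'=q((1,c)q')$ all collapse to the cancellation of $\theta$-terms once $\theta(x,1)=\theta(1,x)=0$ is invoked. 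Finally, $(x,a)\mapsto x$ is a surjective homomorphism $Q\to F$ with kernel exactly $Z$, whence $Q/Z\cong F$.

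For necessity, suppose $Q$ is a central extension with central subloop $Z\le Z(Q)$, a fixed isomorphism $\iota\colon A\to Z$, and a fixed isomorphism $Q/Z\cong F$ yielding a projection homomorphism $\pi\colon Q\to F$. Since $Z$ is central it is normal, so $\pi$ is a genuine loop homomorphism whose fibres are exactly the cosets of $Z$. I would choose a section $s\colon F\to Q$ of $\pi$ normalized so that $s(1)=e$ (possible because $e\in Z$). Because $\pi(s(x)s(y))=xy=\pi(s(xy))$, the elements $s(x)s(y)$ and $s(xy)$ lie in the same coset of $Z$, so there is a unique $\theta(x,y)\in A$ with $s(x)s(y)=s(xy)\iota(\theta(x,y))$; normalization forces $\theta(x,1)=\theta(1,x)=0$, so $\theta$ is a loop cocycle and determines a class in $H(F,A)$. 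I would then define $\Phi\colon F\times A\to Q$ by $\Phi(x,a)=s(x)\iota(a)$; this is a bijection since $s(F)$ meets each coset once and left translation by $s(x)$ maps $Z$ bijectively onto $s(x)Z$.

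The hard part will be showing that $\Phi$ is an isomorphism $Q(F,A,\theta)\to Q$, because loops are not associative and I must move the central factors $\iota(a)$, $\iota(b)$ out of the product $\Phi(x,a)\Phi(y,b)=(s(x)\iota(a))(s(y)\iota(b))$. The only tools available are the three identities encoding centrality of $z\in Z(Q)$, namely $zu=uz$, $u'(uz)=(u'u)z$, and $(zu)u'=z(uu')$. Applying these in turn, I expect to rewrite $(s(x)\iota(a))(s(y)\iota(b))$ as $(s(x)s(y))(\iota(a)\iota(b))=(s(x)s(y))\iota(a+b)$, then substitute $s(x)s(y)=s(xy)\iota(\theta(x,y))$ and collapse the central product to obtain $s(xy)\iota(a+b+\theta(x,y))=\Phi(xy,\,a+b+\theta(x,y))$. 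This matches \eqref{Eq:Ext} exactly, so $\Phi$ transports the multiplication of $Q(F,A,\theta)$ to that of $Q$, completing the proof. Proposition \ref{Pr:CoboundaryIso} then explains why the choice of section changes $\theta$ only by a coboundary, consistent with passing to the class in $H(F,A)$.
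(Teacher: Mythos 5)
Your proof is correct, but note that the paper itself does not prove this theorem at all---it explicitly defers to \cite[Theorem 6]{NV64}---so there is no in-text argument to compare against; what you have written is essentially the standard proof that the cited reference supplies, and it fills the gap self-containedly. Both directions are sound. For sufficiency, the verification that $Z=\setof{(1,c)}{c\in A}$ is central is indeed a triviality: every product of $(1,c)$ with arbitrary elements, in any position and under any bracketing, computes via \eqref{Eq:Ext} to a pair whose $F$-component ignores $1$ and whose $A$-component picks up only $\theta$-terms that vanish by \eqref{Eq:LoopCocycle}. (One small imprecision: the relations you list, e.g.\ $q'((1,c)q)=(q'(1,c))q$, are not literally the fixed-point conditions for $L_{x,y}$ and $R_{x,y}$, which read $y(xz)=(yx)z$ and $(zx)y=z(xy)$ for $z=(1,c)$; but this is harmless, since all identities of this shape hold in $Q(F,A,\theta)$ by the same one-line computation.) For necessity, the key step you flag as ``the hard part'' does go through exactly as you predict: since central elements lie in the commutant and in all three nuclei, one gets $(s(x)\iota(a))(s(y)\iota(b))=(s(x)s(y))\iota(a+b)$ and then $(s(xy)\iota(\theta(x,y)))\iota(a+b)=s(xy)\iota(a+b+\theta(x,y))$, so $\Phi$ transports \eqref{Eq:Ext} onto the multiplication of $Q$. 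Your closing remark also quietly repairs a slight abuse in the statement itself: elements of $H(F,A)$ are classes, not cocycles, and Proposition \ref{Pr:CoboundaryIso} is precisely what makes the statement independent of the chosen representative (equivalently, of the chosen section $s$).
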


Let $\V$ be a variety of loops. We now address the question when $Q(F,A,\theta)$ belongs to $\V$. A necessary condition for $Q(F,A,\theta)\in\V$ is that both $A\in\V$ and $F\in\V$, since $A\le Q(F,A,\theta)$ and $F$ is a factor of $Q(F,A,\theta)$.

Assuming that $A$, $F\in\V$, we call $\theta\in C(F,A)$ a \emph{$\V$-cocycle} if $Q(F,A,\theta)\in\V$. We denote by $C_\V(F,A)\le C(F,A)$ the set of all $\V$-cocycles.

It is usually straightforward to decide which cocycles are $\V$-cocycles. For instance, if $\V$ is the variety of groups then $\theta$ is a $\V$-cocycle if and only if the group cocycle identity $\theta(x,y)+\theta(xy,z) = \theta(y,z)+\theta(x,yz)$ holds.

The notion of $\V$-cocycles is well-defined on the factor space $H(F,A)$, i.e., there is no need to verify the $\V$-cocycle condition for loop coboundaries:

\begin{proposition}\label{Pr:CoboundariesAlwaysVCocycles}
Let $\V$ be a variety of loops, $A$ an abelian group and $F$ a loop such that $A$, $F\in\V$. Let $\theta\in C(F,A)$ and $\hat\tau\in B(F,A)$. Then $\theta$ is a $\V$-cocycle if and only if $\theta+\hat\tau$ is a $\V$-cocycle.
\end{proposition}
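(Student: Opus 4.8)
The plan is to reduce the claim directly to Proposition \ref{Pr:CoboundaryIso}. Unwinding the definition of a $\V$-cocycle, the assertion that $\theta$ is a $\V$-cocycle if and only if $\theta+\hat\tau$ is a $\V$-cocycle says precisely that $Q(F,A,\theta)\in\V$ if and only if $Q(F,A,\theta+\hat\tau)\in\V$. Thus it suffices to exhibit an isomorphism between these two quasigroups and then to recall that membership in a variety is preserved under isomorphism.

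First I would invoke Proposition \ref{Pr:CoboundaryIso}, which supplies the bijection $\varphi\colon Q(F,A,\theta)\to Q(F,A,\theta+\hat\tau)$ given by $(x,a)\mapsto(x,a+\tau(x))$, where $\tau\colon F\to A$ is the map with $\hat\tau(x,y)=\tau(xy)-\tau(x)-\tau(y)$. A short computation (implicit in that proposition) confirms that $\varphi$ respects multiplication: applying $\varphi$ to the product $(x,a)(y,b)=(xy,\,a+b+\theta(x,y))$ computed in $Q(F,A,\theta)$, and comparing with the product $\varphi(x,a)\cdot\varphi(y,b)$ computed in $Q(F,A,\theta+\hat\tau)$, the coboundary term $\hat\tau(x,y)=\tau(xy)-\tau(x)-\tau(y)$ is exactly what is needed to make the two second coordinates agree. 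Hence $\varphi$ is an isomorphism of quasigroups and $Q(F,A,\theta)\cong Q(F,A,\theta+\hat\tau)$.

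Finally, since $\V$ is a variety, it is closed under isomorphic images; therefore $Q(F,A,\theta)\in\V$ holds if and only if $Q(F,A,\theta+\hat\tau)\in\V$ holds, which is the desired equivalence. The hypotheses $A,F\in\V$, together with the fact that $C(F,A)$ consists of loop cocycles (so that $\theta$ and $\theta+\hat\tau$ both satisfy \eqref{Eq:LoopCocycle} and yield genuine loops), guarantee that the $\V$-cocycle condition is meaningful for both arguments. There is essentially no obstacle here: the entire content is the isomorphism of Proposition \ref{Pr:CoboundaryIso} combined with the elementary fact that varieties are isomorphism-closed, which is consistent with the authors' description of the statement as a very simple observation.
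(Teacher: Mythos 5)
Your proof is correct and follows exactly the paper's own argument: the authors likewise deduce the claim directly from Proposition \ref{Pr:CoboundaryIso}, since $Q(F,A,\theta)\cong Q(F,A,\theta+\hat\tau)$ and varieties are closed under isomorphism. Your write-up simply spells out the details that the paper leaves implicit.
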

\begin{proof}
The loops $\theta(F,A,\theta)$ and $\theta(F,A,\theta+\hat\tau)$ are isomorphic by Proposition \ref{Pr:CoboundaryIso}.
\end{proof}

Finally, the group $\aut{F}\times\aut{A}$ acts on $C(F,A)$ by
\begin{displaymath}
    \theta\mapsto\theta^{(\alpha,\beta)},\quad
    \theta^{(\alpha,\beta)}(x,y) = \beta^{-1}(\theta(\alpha(x),\alpha(y))).
\end{displaymath}
It also acts on $H(F,A)$ since for any coboundary $\hat\tau$ we have $\hat\tau^{(\alpha,\beta)} = \widehat{\beta^{-1}\tau\alpha}$. Moreover, this action preserves the isomorphism type of the associated loops and hence also the $\V$-cocycle property:

\begin{proposition}
Let $F$ be a loop, $A$ an abelian group, $\theta\in C(F,A)$ and $(\alpha,\beta)\in\aut{F}\times\aut{A}$. Then $Q(F,A,\theta)$ is isomorphic to $Q(F,A,\theta^{(\alpha,\beta)})$.
\end{proposition}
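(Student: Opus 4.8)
The plan is to exhibit an explicit bijection between the two extensions built directly from $\alpha$ and $\beta$, in the same one-line spirit as the proof of Proposition \ref{Pr:CoboundaryIso}. The natural candidate is
\[
\varphi:Q(F,A,\theta^{(\alpha,\beta)})\to Q(F,A,\theta),\qquad \varphi(x,a)=(\alpha(x),\beta(a)).
\]
Since $\alpha\in\aut{F}$ and $\beta\in\aut{A}$ are bijections, $\varphi$ is automatically a bijection, so the whole task reduces to checking that $\varphi$ respects the two multiplications defined by \eqref{Eq:Ext}. Because isomorphism is symmetric, establishing that $\varphi$ is a homomorphism suffices for the stated claim.

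First I would expand the left-hand side $\varphi((x,a)(y,b))$, where the product is taken in $Q(F,A,\theta^{(\alpha,\beta)})$. Unwinding the definition of the action, $\theta^{(\alpha,\beta)}(x,y)=\beta^{-1}(\theta(\alpha(x),\alpha(y)))$, so the second coordinate of the product is $a+b+\beta^{-1}(\theta(\alpha(x),\alpha(y)))$ and the first coordinate is $xy$. Applying $\varphi$ and then using that $\beta$ is an additive bijection of $A$ with $\beta\beta^{-1}=1$, the result becomes $(\alpha(xy),\,\beta(a)+\beta(b)+\theta(\alpha(x),\alpha(y)))$. Next I would expand the right-hand side $\varphi(x,a)\varphi(y,b)=(\alpha(x),\beta(a))(\alpha(y),\beta(b))$, this time multiplying in $Q(F,A,\theta)$; its first coordinate is $\alpha(x)\alpha(y)$ and its second coordinate is $\beta(a)+\beta(b)+\theta(\alpha(x),\alpha(y))$. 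Comparing, the second coordinates already agree, and the first coordinates agree precisely because $\alpha$ is multiplicative on $F$, i.e.\ $\alpha(xy)=\alpha(x)\alpha(y)$, which would complete the verification.

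I anticipate no real obstacle; the content is entirely bookkeeping. The only points demanding care are choosing the correct direction for $\varphi$ so that the single application of $\beta$ cancels the $\beta^{-1}$ built into the action, and invoking the two structural facts at the right moments, namely that $\alpha$ preserves the product on $F$ and that $\beta$ is additive on $A$. (The reverse map $(x,a)\mapsto(\alpha^{-1}(x),\beta^{-1}(a))$ out of $Q(F,A,\theta)$ works equally well and may be marginally cleaner if one prefers to carry $\theta$ rather than $\theta^{(\alpha,\beta)}$ on the source.)
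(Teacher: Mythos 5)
Your proposal is correct and is exactly the paper's proof: the paper also takes the bijection $Q(F,A,\theta^{(\alpha,\beta)})\to Q(F,A,\theta)$, $(x,a)\mapsto(\alpha(x),\beta(a))$, in the same direction, leaving the homomorphism check (which you carried out correctly) to the reader. Your explicit verification, including the observation that the single $\beta$ cancels the $\beta^{-1}$ built into the action, fills in precisely the bookkeeping the paper omits.
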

\begin{proof}
Consider the bijection $Q(F,A,\theta^{(\alpha,\beta)})\to Q(F,A,\theta)$ given by $(x,a)\mapsto (\alpha(x),\beta(a))$.
\end{proof}

Altogether, while classifying central extensions of a given abelian group $A$ by a given loop $F$ up to isomorphism, it suffices to consider representatives from the orbits of the group action of $\aut{F}\times\aut{A}$ on $H(F,A)$. It is possible for representatives from distinct orbits to yield isomorphic loops---the isomorphism problem of central extensions is delicate.

\section{Central extensions of left Bruck loops and commutative automorphic loops}

In this section we work out the cocycle conditions for the variety of left Bruck loops and the variety of commutative automorphic loops. We use the same notational conventions as in Section \ref{Sc:CentralExtensions}.

\begin{lemma}\label{Lm:LeftBolCocycle}
Let $F$ be a loop, $A$ an abelian group and $\theta\in C(F,A)$. Then $Q(F,A,\theta)$ is a left Bol loop if and only if $F$ is a left Bol loop and
\begin{equation}\label{Eq:LeftBolCocycle}
    \theta(x,z)+\theta(y,xz)+\theta(x,y(xz)) = \theta(y,x)+\theta(x,yx)+\theta(x(yx),z)
\end{equation}
holds for every $x$, $y$, $z\in F$.
\end{lemma}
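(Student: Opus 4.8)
The plan is to verify the left Bol identity \eqref{Eq:LeftBol} directly in $Q=Q(F,A,\theta)$ by substituting arbitrary elements $(x,a)$, $(y,b)$, $(z,c)$ and repeatedly applying the multiplication rule \eqref{Eq:Ext}. Since $\theta\in C(F,A)$ is a loop cocycle, $Q$ is already a loop with identity $(1,0)$, so the only thing at stake is the identity itself. Each application of \eqref{Eq:Ext} multiplies the $F$-coordinates in $F$ and adds the $A$-coordinates together with the relevant value of $\theta$; because $A$ is abelian, the order in which these $A$-coordinates are summed is irrelevant.

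First I would expand the left-hand side $(x,a)\bigl((y,b)\bigl((x,a)(z,c)\bigr)\bigr)$ in three steps. The innermost product contributes $\theta(x,z)$, the next contributes $\theta(y,xz)$, and the outermost contributes $\theta(x,y(xz))$, so that the result has first coordinate $x(y(xz))$ and second coordinate
\begin{displaymath}
    2a+b+c+\theta(x,z)+\theta(y,xz)+\theta(x,y(xz)).
\end{displaymath}
Expanding the right-hand side $\bigl((x,a)\bigl((y,b)(x,a)\bigr)\bigr)(z,c)$ in the same way gives first coordinate $(x(yx))z$ and second coordinate
\begin{displaymath}
    2a+b+c+\theta(y,x)+\theta(x,yx)+\theta(x(yx),z).
\end{displaymath}

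Comparing the two pairs coordinatewise, the left Bol identity holds in $Q$ for all choices of $a$, $b$, $c$ if and only if two things happen: the first coordinates agree for all $x$, $y$, $z$, which is exactly the left Bol identity for $F$; and the second coordinates agree for all $x$, $y$, $z$, $a$, $b$, $c$. In the latter condition the common summand $2a+b+c$ occurs on both sides and cancels in the abelian group $A$, leaving precisely \eqref{Eq:LeftBolCocycle}. Conversely, if $F$ is a left Bol loop and \eqref{Eq:LeftBolCocycle} holds, both coordinates match and $Q$ is left Bol. There is no real obstacle beyond careful bookkeeping; the one point worth stating explicitly is that the $a$-, $b$-, $c$-dependent part is identical on the two sides, which is what cleanly decouples the condition on $F$ from the purely $\theta$-dependent condition \eqref{Eq:LeftBolCocycle}.
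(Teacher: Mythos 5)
Your proposal is correct and follows essentially the same route as the paper: expand both sides of the left Bol identity in $Q(F,A,\theta)$ via \eqref{Eq:Ext}, obtaining exactly the same two pairs of coordinates, and compare them, with the common summand $2a+b+c$ cancelling so that the condition splits into the Bol identity for $F$ and the cocycle identity \eqref{Eq:LeftBolCocycle}. The paper compresses this into a two-line "straightforward computation," but the content is identical.
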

\begin{proof}
Straightforward computation shows that $(x,a)((y,b)\cdot (x,a)(z,c))$ is equal to
\begin{displaymath}
    (x(y(xz)), 2a+b+c+\theta(x,z)+\theta(y,xz)+\theta(x,y(xz))),
\end{displaymath}
while $((x,a)\cdot (y,b)(x,a))(z,c)$ is equal to
\begin{displaymath}
    ((x(yx))z,2a+b+c+\theta(y,x)+\theta(x,yx)+\theta(x(yx),z)).
\end{displaymath}
The claim follows.
\end{proof}

\begin{lemma}\label{Lm:AIPCocycle}
Let $F$ be a loop, $A$ an abelian group and $\theta\in C(F,A)$. Then:
\begin{enumerate}
\item[(i)] $Q(F,A,\theta)$ has two-sided inverses if and only if $F$ has two-sided inverses and
\begin{equation}\label{Eq:TwoSidedInversesCocycle}
    \theta(x,x^{-1}) = \theta(x^{-1},x)
\end{equation}
holds for every $x\in F$. Then $(x,a)^{-1} = (x^{-1},-a-\theta(x,x^{-1}))$.
\item[(ii)] $Q(F,A,\theta)$ has the automorphic inverse property if and only if $F$ has the automorphic inverse property, \eqref{Eq:TwoSidedInversesCocycle} holds, and
\begin{equation}\label{Eq:AIPCocycle}
    \theta(x,x^{-1})+\theta(y,y^{-1}) = \theta(x,y)+\theta(x^{-1},y^{-1})+\theta(xy,(xy)^{-1})
\end{equation}
holds for every $x$, $y\in F$.
\end{enumerate}
\end{lemma}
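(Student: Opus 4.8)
The plan is to treat both parts by the same componentwise bookkeeping used in the proof of Lemma~\ref{Lm:LeftBolCocycle}: everything reduces to expanding products via the extension formula~\eqref{Eq:Ext} and then comparing the $F$-coordinate and the $A$-coordinate separately. Throughout I would use that $\theta$ is a loop cocycle, so by~\eqref{Eq:LoopCocycle} the identity element of $Q(F,A,\theta)$ is $(1,0)$, and that $F$, being a loop, already has a unique left inverse and a unique right inverse for each element (the translations are bijective); the substantive content of ``having two-sided inverses'' is therefore that these two one-sided inverses coincide.

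For part~(i), first I would solve the two equations $(x,a)(y,b)=(1,0)$ and $(y,b)(x,a)=(1,0)$ for $(y,b)$. The first equation forces $y$ to be the right inverse of $x$ in $F$ and $b=-a-\theta(x,y)$; the second forces $y$ to be the left inverse of $x$ in $F$ and $b=-a-\theta(y,x)$. Hence $(x,a)$ admits a two-sided inverse in $Q(F,A,\theta)$ if and only if these two candidates agree, that is, the left and right inverses of $x$ coincide with a common value $x^{-1}$ (so $F$ has two-sided inverses) and simultaneously $\theta(x,x^{-1})=\theta(x^{-1},x)$, which is~\eqref{Eq:TwoSidedInversesCocycle}. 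Reading off the common second coordinate then yields the stated formula $(x,a)^{-1}=(x^{-1},-a-\theta(x,x^{-1}))$.

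For part~(ii) I would assume the conditions of part~(i), so that every element of $Q(F,A,\theta)$ has a two-sided inverse given by that formula, and then expand both sides of $((x,a)(y,b))^{-1}=(x,a)^{-1}(y,b)^{-1}$ using~\eqref{Eq:Ext}. The left-hand side is the inverse of $(xy,a+b+\theta(x,y))$, namely $((xy)^{-1},-(a+b+\theta(x,y))-\theta(xy,(xy)^{-1}))$, while the right-hand side multiplies $(x^{-1},-a-\theta(x,x^{-1}))$ by $(y^{-1},-b-\theta(y,y^{-1}))$. Comparing $F$-coordinates gives $(xy)^{-1}=x^{-1}y^{-1}$, which holds for all $x,y$ precisely when $F$ has the automorphic inverse property; comparing $A$-coordinates and cancelling $-a-b$ collapses to exactly~\eqref{Eq:AIPCocycle}. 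Thus $Q(F,A,\theta)$ has the automorphic inverse property if and only if $F$ does, \eqref{Eq:TwoSidedInversesCocycle} holds, and~\eqref{Eq:AIPCocycle} holds.

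There is no genuine conceptual obstacle here; the computation is routine, and the step I would be most careful about is the sign-tracking in the $A$-coordinate of part~(ii), together with checking that both resulting conditions are independent of the parameters $a$ and $b$. This independence, which holds because $a$ and $b$ cancel, is exactly what lets a pointwise identity in $Q(F,A,\theta)$ descend to the single identity~\eqref{Eq:AIPCocycle} on $\theta$ rather than to a family of conditions indexed by $A$.
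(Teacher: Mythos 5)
Your proposal is correct and follows essentially the same route as the paper's proof: part~(i) by solving $(x,a)(y,b)=(1,0)=(y,b)(x,a)$ componentwise, and part~(ii) by expanding $(x,a)^{-1}(y,b)^{-1}$ and $((x,a)(y,b))^{-1}$ via \eqref{Eq:Ext} and the inverse formula from~(i), then matching coordinates. The only difference is presentational---you make explicit that in a loop the one-sided inverses always exist and the issue is whether they coincide, a point the paper leaves implicit.
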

\begin{proof}
(i) The element $(x,a)$ has a two-sided inverse $(y,b)$ if and only if $(1,0)=(x,a)(y,b) = (xy,a+b+\theta(x,y))$ and at the same time $(1,0)=(y,b)(x,a)=(yx,a+b+\theta(y,x))$, that is, if and only if $y=x^{-1}$, $\theta(x,x^{-1})=\theta(x^{-1},x)$ and $b=-a-\theta(x,x^{-1})$. In that case, $(x,a)^{-1} = (x^{-1},-a-\theta(x,x^{-1}))$.

(ii) Suppose that $Q(F,A,\theta)$ has two-sided inverses. Then
\begin{align*}
    (x,a)^{-1}(y,b)^{-1} &= (x^{-1},-a-\theta(x,x^{-1}))(y^{-1},-b-\theta(y,y^{-1}))\\
        &=(x^{-1}y^{-1},-a-b-\theta(x,x^{-1})-\theta(y,y^{-1})+\theta(x^{-1},y^{-1})),
\end{align*}
while
\begin{align*}
    ((x,a)(y,b))^{-1} &= (xy,a+b+\theta(x,y))^{-1}\\
        &= ((xy)^{-1},-a-b-\theta(x,y)-\theta(xy,(xy)^{-1})).
\end{align*}
The claim follows.
\end{proof}

\begin{corollary}\label{Cr:LeftBruckCocycle}
Let $F$ be a loop, $A$ an abelian group and $\theta\in C(F,A)$. Then $Q(F,A,\theta)$ is a left Bruck loop if and only if $F$ is a left Bruck loop and the identities \eqref{Eq:LeftBolCocycle} and \eqref{Eq:AIPCocycle} hold.
\end{corollary}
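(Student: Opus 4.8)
The plan is to combine the two preceding lemmas, since by the definition recorded in Section \ref{Sc:Notation} a left Bruck loop is precisely a left Bol loop with the automorphic inverse property. Thus $Q(F,A,\theta)$ is a left Bruck loop if and only if it is simultaneously a left Bol loop and has the automorphic inverse property, and I would characterize these two conditions using Lemma \ref{Lm:LeftBolCocycle} and Lemma \ref{Lm:AIPCocycle}(ii), respectively.

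For the forward direction I would assume $Q(F,A,\theta)$ is a left Bruck loop. Being in particular a left Bol loop, Lemma \ref{Lm:LeftBolCocycle} immediately gives that $F$ is a left Bol loop and that \eqref{Eq:LeftBolCocycle} holds. Left Bol loops have two-sided inverses (they are power associative with the left inverse property), so $Q(F,A,\theta)$ has two-sided inverses and also enjoys the automorphic inverse property; Lemma \ref{Lm:AIPCocycle}(ii) then yields that $F$ has the automorphic inverse property and that \eqref{Eq:AIPCocycle} holds. Combining, $F$ is a left Bol loop with the automorphic inverse property, i.e.\ a left Bruck loop, and both \eqref{Eq:LeftBolCocycle} and \eqref{Eq:AIPCocycle} hold.

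For the converse I would assume $F$ is a left Bruck loop and that \eqref{Eq:LeftBolCocycle} and \eqref{Eq:AIPCocycle} hold. Since $F$ is a left Bol loop and \eqref{Eq:LeftBolCocycle} holds, Lemma \ref{Lm:LeftBolCocycle} gives that $Q(F,A,\theta)$ is a left Bol loop, and hence has two-sided inverses. To invoke Lemma \ref{Lm:AIPCocycle}(ii) I need its three hypotheses: $F$ has the automorphic inverse property (as a left Bruck loop), \eqref{Eq:AIPCocycle} holds by assumption, and \eqref{Eq:TwoSidedInversesCocycle} holds. Granting these, Lemma \ref{Lm:AIPCocycle}(ii) yields that $Q(F,A,\theta)$ has the automorphic inverse property, so it is a left Bol loop with that property, hence a left Bruck loop.

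The only point requiring care — and really the sole content beyond quoting the lemmas — is why the inverse-symmetry condition \eqref{Eq:TwoSidedInversesCocycle} need not be listed among the hypotheses of the corollary. It is absorbed for free: in both directions, once $Q(F,A,\theta)$ is known to be a left Bol loop it automatically has two-sided inverses, and then Lemma \ref{Lm:AIPCocycle}(i) forces \eqref{Eq:TwoSidedInversesCocycle} to hold. Thus \eqref{Eq:TwoSidedInversesCocycle} is a consequence of \eqref{Eq:LeftBolCocycle} together with $F$ being a left Bol loop, rather than an independent requirement, which is exactly why the clean statement of the corollary omits it.
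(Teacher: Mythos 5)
Your proof is correct and follows essentially the same route as the paper: both combine Lemma \ref{Lm:LeftBolCocycle} with Lemma \ref{Lm:AIPCocycle} and then observe that \eqref{Eq:TwoSidedInversesCocycle} is redundant because a left Bol loop automatically has two-sided inverses, so that identity is forced by \eqref{Eq:LeftBolCocycle} via Lemma \ref{Lm:AIPCocycle}(i). Your write-up simply makes explicit the two directions and the redundancy argument that the paper states in compressed form.
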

\begin{proof}
By definition, a loop is left Bruck if and only if it is left Bol and satisfies the automorphic inverse property. By Lemmas \ref{Lm:LeftBolCocycle} and \ref{Lm:AIPCocycle}, $Q(F,A,\theta)$ is left Bruck if and only if $F$ is left Bruck and \eqref{Eq:LeftBolCocycle}, \eqref{Eq:TwoSidedInversesCocycle}, \eqref{Eq:AIPCocycle} hold. Since every left Bol loop has two-sided inverses, the identity \eqref{Eq:TwoSidedInversesCocycle} follows from \eqref{Eq:LeftBolCocycle} and can be omitted.
\end{proof}

Call a loop $Q$ \emph{left automorphic} if $L_{x,y}\in\aut{Q}$ for every $x$, $y\in Q$. As we have already noted in the introduction, a commutative loop is automorphic if and only if it is left automorphic. We obviously have:

\begin{lemma}
Let $F$ be a loop, $A$ an abelian group and $\theta\in C(F,A)$. Then $Q(F,A,\theta)$ is commutative if and only if $F$ is commutative and
\begin{equation}\label{Eq:CommutativeCocycle}
    \theta(x,y)=\theta(y,x)
\end{equation}
holds for every $x$, $y\in F$.
\end{lemma}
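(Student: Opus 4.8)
The plan is to compare the two products $(x,a)(y,b)$ and $(y,b)(x,a)$ directly via the extension formula \eqref{Eq:Ext}, matching coordinates. Using \eqref{Eq:Ext} one computes
\[
(x,a)(y,b) = (xy,\,a+b+\theta(x,y)), \qquad (y,b)(x,a) = (yx,\,a+b+\theta(y,x)),
\]
where I have already used that $A$ is abelian to rewrite $b+a$ as $a+b$ in the second product. The loop $Q(F,A,\theta)$ is commutative precisely when these two elements of $F\times A$ coincide for all $x$, $y\in F$ and all $a$, $b\in A$, and two pairs coincide exactly when both of their coordinates coincide.

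For the forward direction I would simply read off the two coordinate conditions. Equality in the first coordinate forces $xy=yx$ for all $x$, $y\in F$, that is, $F$ is commutative. Equality in the second coordinate, after cancelling the common term $a+b$ in the abelian group $A$, forces $\theta(x,y)=\theta(y,x)$, which is exactly \eqref{Eq:CommutativeCocycle}. Conversely, if $F$ is commutative and \eqref{Eq:CommutativeCocycle} holds, then both coordinates of the two displayed products agree, so the products are equal and $Q(F,A,\theta)$ is commutative.

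There is no genuine obstacle here: the argument rests entirely on the elementary observation that multiplication in a central extension splits coordinatewise into the base multiplication and an abelian-group sum twisted by $\theta$, so commutativity of the extension decouples into commutativity of each coordinate. The only point deserving a moment's attention is the quantifier over $a$, $b\in A$, but this is harmless—once the abelian cancellation is carried out, the second-coordinate condition no longer involves $a$ or $b$, so it is a condition on $\theta$ alone, exactly as \eqref{Eq:CommutativeCocycle} asserts.
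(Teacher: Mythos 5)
Your proof is correct and is precisely the direct coordinatewise computation that the paper has in mind: the paper states this lemma with the words ``We obviously have'' and supplies no proof at all, since comparing $(x,a)(y,b)$ with $(y,b)(x,a)$ in the extension \eqref{Eq:Ext} immediately decouples into commutativity of $F$ and the symmetry condition \eqref{Eq:CommutativeCocycle}. Your handling of the quantifiers over $a$, $b\in A$ (noting they cancel and impose no extra condition) is exactly the right level of care.
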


\begin{lemma}
Let $F$ be a loop, $A$ an abelian group and $\theta\in C(F,A)$. Then $Q(F,A,\theta)$ is a left automorphic loop if and only if $F$ is a left automorphic loop and
\begin{equation}\label{Eq:LeftAutomorphicCocycle}
\begin{split}
    \theta(x,z)&+\theta(x,u) + \theta(y,xz) + \theta(y,xu) + \theta(yx,L_{x,y}(zu)) + \theta(L_{x,y}(z),L_{x,y}(u))\\
        &= \theta(z,u) +\theta(y,x) + \theta(x,zu) + \theta(y,x(zu)) + \theta(yx,L_{x,y}(z)) + \theta(yx,L_{x,y}(u))
\end{split}
\end{equation}
holds for every $x$, $y$, $z$, $u\in F$.
\end{lemma}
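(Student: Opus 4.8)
The plan is to follow the same template as the proofs of Lemmas \ref{Lm:LeftBolCocycle} and \ref{Lm:AIPCocycle}: express the left inner mapping of $Q(F,A,\theta)$ explicitly on $F\times A$, and then split the automorphism condition into a condition on each of the two coordinates. First I would compute $L_{(x,a),(y,b)} = L_{(y,b)(x,a)}^{-1}L_{(y,b)}L_{(x,a)}$ using the multiplication \eqref{Eq:Ext} together with the left-division formula $(w,d)\ldiv(v,e) = (w\ldiv v,\, e-d-\theta(w,w\ldiv v))$ recorded in Section \ref{Sc:CentralExtensions}. A direct computation should yield
\begin{displaymath}
    L_{(x,a),(y,b)}(z,c) = \bigl(L_{x,y}(z),\, c+\sigma_{x,y}(z)\bigr),\qquad \sigma_{x,y}(z) = \theta(x,z)+\theta(y,xz)-\theta(y,x)-\theta(yx,L_{x,y}(z)).
\end{displaymath}
Two features of this formula deserve emphasis: the first coordinate is governed by the inner mapping $L_{x,y}$ of $F$ (as it must be, since $(x,a)\mapsto x$ is a homomorphism onto $F$), and the second coordinate depends on $z$ and $c$ but not on the parameters $a$, $b$. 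The latter fact means that whether $L_{(x,a),(y,b)}$ is an automorphism is independent of $a$ and $b$.

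Next I would impose that $\phi=L_{(x,a),(y,b)}$ respect the product, i.e.\ $\phi((z,c)(u,d))=\phi(z,c)\phi(u,d)$, and compare the two coordinates separately using the displayed formula for $\phi$ and the multiplication \eqref{Eq:Ext}. The first coordinate gives exactly $L_{x,y}(zu)=L_{x,y}(z)L_{x,y}(u)$; ranging over all $x$, $y$, $z$, $u$ this says precisely that every $L_{x,y}$ lies in $\aut{F}$, that is, $F$ is left automorphic. The second coordinate, after cancelling the common summand $c+d$, reduces to
\begin{displaymath}
    \theta(z,u)+\sigma_{x,y}(zu) = \sigma_{x,y}(z)+\sigma_{x,y}(u)+\theta(L_{x,y}(z),L_{x,y}(u)).
\end{displaymath}

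Finally I would substitute the definition of $\sigma_{x,y}$ into this equation and collect terms: the two copies of $-\theta(y,x)$ coming from $\sigma_{x,y}(z)$ and $\sigma_{x,y}(u)$ combine with the single $-\theta(y,x)$ from $\sigma_{x,y}(zu)$ to leave one $\theta(y,x)$, and transposing the surviving summands rewrites the identity in the symmetric form \eqref{Eq:LeftAutomorphicCocycle}. I expect no conceptual obstacle: because $\phi$ acts on the second coordinate by the affine translation $c\mapsto c+\sigma_{x,y}(z)$, the homomorphism condition splits cleanly and the argument is determinate bookkeeping of the cocycle values appearing in $\phi$ and in the product. The one step requiring genuine care is the evaluation of $L_{(y,b)(x,a)}^{-1}$ via the left-division formula, since a sign slip there would propagate through $\sigma_{x,y}$ and corrupt \eqref{Eq:LeftAutomorphicCocycle}; I would guard against this by checking that the first coordinate it produces is indeed $L_{x,y}(z)=L_{yx}^{-1}L_yL_x(z)$.
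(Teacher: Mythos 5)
Your proposal is correct and follows essentially the same route as the paper's proof: it computes $L_{(x,a),(y,b)}(z,c) = \bigl(L_{x,y}(z),\,c+\theta(x,z)+\theta(y,xz)-\theta(y,x)-\theta(yx,L_{x,y}(z))\bigr)$ exactly as the paper does, then compares the two coordinates of the homomorphism condition, with the second coordinate rearranging to \eqref{Eq:LeftAutomorphicCocycle}. Your bookkeeping of the $\theta(y,x)$ terms and the resulting symmetric form both check out against the paper's displayed expressions for $r$ and $s$.
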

\begin{proof}
Recall that $L_{(x,a)}^{-1}(y,b) = (x,a)\ldiv(y,b) = (x\ldiv y,b-a-\theta(x,x\ldiv y))$ and thus
\begin{align*}
    L_{(x,a),(y,b)}(z,c) &= L_{(y,b)(x,a)}^{-1}L_{(y,b)}L_{(x,a)}(z,c)\\
    &= L_{(yx,a+b+\theta(y,x))}^{-1}L_{(y,b)}(xz,a+c+\theta(x,z)) \\
    &= L_{(yx,a+b+\theta(y,x))}^{-1}(y(xz),a+b+c+\theta(x,z)+\theta(y,xz))\\
    &= (L_{x,y}(z),c+\theta(x,z)+\theta(y,xz)-\theta(y,x)-\theta(yx,L_{x,y}(z))).
\end{align*}
Then $L_{(x,a),(y,b)}(z,c) L_{(x,a),(y,b)}(u,d)$ is equal to $(L_{x,y}(z)L_{x,y}(u),r)$, where $r$ is equal to
\begin{multline*}
     c + d + \theta(x,z) + \theta(y,xz) - \theta(y,x) - \theta(yx,L_{x,y}(z))    \\+ \theta(x,u) + \theta(y,xu) - \theta(y,x) - \theta(yx,L_{x,y}(u)) + \theta(L_{x,y}(z),L_{x,y}(u)),
\end{multline*}
while $L_{(x,a),(y,b)}((z,c)(u,d)) = L_{(x,a),(y,b)}(zu,c+d+\theta(z,u))$ is equal to $(L_{x,y}(zu),s)$, where $s$ is equal to
\begin{displaymath}
    c + d + \theta(z,u) + \theta(x,zu) + \theta(y,x(zu)) - \theta(y,x) - \theta(yx,L_{x,y}(zu)).
\end{displaymath}
The claim follows.
\end{proof}

\begin{corollary}\label{Cr:CommutativeAutomorphicCocycle}
Let $F$ be a loop, $A$ an abelian group and $\theta\in C(F,A)$. Then $Q(F,A,\theta)$ is a commutative automorphic loop if and only if $F$ is a commutative automorphic loop and \eqref{Eq:CommutativeCocycle}, \eqref{Eq:LeftAutomorphicCocycle} hold.
\end{corollary}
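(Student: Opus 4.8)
The plan is to deduce this corollary directly from the two immediately preceding lemmas, using only the observation (already recorded in the paper) that a commutative loop is automorphic if and only if it is left automorphic. Since ``commutative automorphic'' means commutative together with automorphic, and since for a commutative loop automorphicity coincides with left automorphicity, the whole argument reduces to chaining the commutativity lemma and the left-automorphic lemma and invoking this equivalence at the right places. No further computation is needed, because $\theta\in C(F,A)$ is a loop cocycle and hence $Q(F,A,\theta)$ is automatically a loop, so we never have to worry about whether the structures in question are loops.

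For the forward direction I would assume that $Q(F,A,\theta)$ is a commutative automorphic loop. Its commutativity gives, by the commutativity lemma, that $F$ is commutative and that \eqref{Eq:CommutativeCocycle} holds. Being commutative and automorphic, $Q(F,A,\theta)$ is in particular left automorphic, so the left-automorphic lemma yields that $F$ is left automorphic and that \eqref{Eq:LeftAutomorphicCocycle} holds. Combining these, $F$ is a commutative and left automorphic loop, hence a commutative automorphic loop, and both cocycle identities hold, as required.

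For the converse I would assume that $F$ is a commutative automorphic loop and that \eqref{Eq:CommutativeCocycle} and \eqref{Eq:LeftAutomorphicCocycle} hold. Commutativity of $F$ together with \eqref{Eq:CommutativeCocycle} gives, by the commutativity lemma, that $Q(F,A,\theta)$ is commutative. Since $F$ is commutative automorphic it is left automorphic, so $F$ left automorphic together with \eqref{Eq:LeftAutomorphicCocycle} gives, by the left-automorphic lemma, that $Q(F,A,\theta)$ is left automorphic. A commutative and left automorphic loop is automorphic, whence $Q(F,A,\theta)$ is a commutative automorphic loop.

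There is essentially no obstacle here: all the genuine work has already been carried out in establishing the two preceding lemmas, in particular the lengthy translation computation verifying \eqref{Eq:LeftAutomorphicCocycle}. The one point that requires a little care is that the passage between ``automorphic'' and ``left automorphic'' must be applied to the correct loop in each direction---to $Q(F,A,\theta)$ in the forward direction and to $F$ in the converse---but in both cases the relevant loop has already been shown (or assumed) to be commutative, so the equivalence legitimately applies.
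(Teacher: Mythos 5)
Your proof is correct and is exactly the argument the paper intends: the corollary is stated without proof precisely because it follows by chaining the commutativity lemma and the left-automorphic lemma, using the fact (recorded in Section \ref{Sc:Notation}) that a commutative loop is automorphic if and only if it is left automorphic. Your care in applying that equivalence to $Q(F,A,\theta)$ in one direction and to $F$ in the other is the only point of substance, and you handle it correctly.
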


\section{The algorithm}

Our approach to enumeration is similar to that of \cite{NV64}. The following algorithm has been implemented in \texttt{GAP} \cite{GAP} using the package \texttt{LOOPS} \cite{LOOPS}.

Let $p$ be an odd prime and $A=\mathbb Z_p$ the cyclic group of order $p$.

\subsection{Central extensions of $A$ by a given factor $F$}

Let $F$ be a loop of order $p^k$. The vector space $B(F,A)$ of loop coboundaries can be constructed as the linear span over the $p$-element field $GF(p)$ of the set $\setof{\widehat{\tau_c}}{1\ne c\in F}$, where $\tau_c:F\to A$ is given by
\begin{displaymath}
    \tau_c(x)=\left\{
    \begin{array}{ll}
        1,&\text{if $x=c$},\\
        0,&\text{otherwise.}
    \end{array}
    \right.
\end{displaymath}

Let now $F$ be a left Bruck loop of order $p^k$. Consider the $|F|^2=p^{2k}$ variables $\theta(x,y)$ indexed by $x$, $y\in F$. By Corollary \ref{Cr:LeftBruckCocycle}, the vector space $C_\B(F,A)$ consists of the solutions to the homogeneous system of $2|F|+|F|^2+|F|^3 = 2p^k+p^{2k}+p^{3k}$ linear equations
\begin{align*}
    \theta(x,1)&=0,  &x\in F,\\
    \theta(1,x)&=0,  &x\in F,\\
    \theta(x,x^{-1}){+}\theta(y,y^{-1}) &= \theta(x,y){+}\theta(x^{-1},y^{-1}){+}\theta(xy,(xy)^{-1}),  &x,\,y\in F,\\
    \theta(x,z){+}\theta(y,xz){+}\theta(x,y(xz)) &= \theta(y,x){+}\theta(x,yx){+}\theta(x(yx), z),  &x,\,y,\,z\in F,
\end{align*}
over $GF(p)$. The linear equations forming this system correspond to the identities \eqref{Eq:LoopCocycle}, \eqref{Eq:LeftBolCocycle} and \eqref{Eq:AIPCocycle}.

When $|F|$ is large enough (say $|F|=3^4$), the linear system must be periodically reduced while it is being set up so as to fit into memory.

For $p=3$ and $k\le 5$, the dimensions of the vector spaces $C_\B(F,A)$ and $B(F,A)$ are recorded in Tables \ref{Tb:1} and \ref{Tb:2}.

The action of $\aut{F}\times\aut{A}$ on $H_\B(F,A)=C_\B(F,A)/B(F,A)$  can be implemented in a straightforward fashion. We were not able to calculate the orbits for the case $F=\Z_3^4$, since $|H_\B(\Z_3^4,\Z_3)| = 3^{24}$.

The set
\begin{displaymath}
    \mathcal Q_\B(F,A) = \setof{Q(F,A,\theta)}{\theta\in H_\B(F,A)\text{ modulo the action of }\aut{F}\times\aut{A}}
\end{displaymath}
contains all left Bruck loops of order $p^{k+1}$ that are central extensions of $A$ by $F$, up to isomorphism. But it can contain duplicate isomorphism types and we must therefore filter $\mathcal Q_\B(F,A)$ up to isomorphism, resulting in a smaller set $\mathcal Q^*_\B(F,A)$.

Calculating $\mathcal Q^*_\B(F,A)$ is a nontrivial task. For instance, there exists a left Bruck loop $F$ of order $3^4$ such that $|\mathcal Q_\B(F,A)|=29525$, so, in the worst case, filtering $\mathcal Q_\B(F,A)$ up to isomorphism will require $\binom{29525}{2}=435848050$ isomorphism checks among loops of order $243$, which is intractable. (It turns out that $|\mathcal Q^*_\B(F,A)|=26865$ here, so the above upper bound is not far from the actual number of isomorphism checks required.)

However, by precalculating certain isomorphism invariants, the set $\mathcal Q_\B(F,A)$ can be pre-partitioned without any isomorphism checks. In more detail, consider $Q\in\mathcal Q_\B(F,A)$. For every $x\in Q$ we have precalculated the numerical invariant $I_x = (I_{x,1},\dots,I_{x,6})$, where
\begin{align*}
    I_{x,1} &= \text{ the cycle structure of $L_x$},\\
    I_{x,2} &= |x|,\\
    I_{x,3} &= ( |\setof{y\in Q}{y^2=x}|,\,|\setof{y\in Q}{y^3=x}|,\,|\setof{y\in Q}{y^4=x}| ),\\
    I_{x,4,a} &= |\setof{y\in Q}{x(xy)=(xx)y\text{ and }|y|=a}|,\\
    I_{x,5,a,b} &= |\setof{(y,z)\in Q\times Q}{y(zx) = (yz)x\text{ and }|y|=a,\,|z|=b}|,\\
    I_{x,6,a} &= |\setof{y\in Q}{xy=yx\text{ and }|y|=a}|.
\end{align*}
(These invariants are not necessarily independent and we have used proper subsets of the invariants in certain situations.) Let  $I(Q)$ be the lexicographically ordered multiset $\setof{I_x}{x\in Q}$. The equivalence relation $\sim$ on $\mathcal Q_\B(F,A)$ defined by $Q_1\sim Q_2$ if and only if $I(Q_1)=I(Q_2)$ induces a partition of $\mathcal Q_\B(F,A)$, and isomorphism checks need to be performed only within each part of the partition. Moreover, given $Q\in\mathcal Q_B(F,A)$, the equivalence relation $\approx$ on $Q$ defined by $x\approx y$ if and only if $I_x=I_y$ induces a partition of $Q$ that must be preserved by any isomorphism from $Q$ to another loop.

Using these invariants, the number of required isomorphism checks in the above example was reduced from $435848050$ to $52475$, which took a few days to perform.

When $F$ is a commutative automorphic loop of order $p^k$, we proceed analogously. The vector space $C_\A(F,A)$ consists of the solutions to the homogeneous system of $2|F|+|F|^2+|F|^4 = 2p^k+p^{2k}+p^{4k}$ linear equations corresponding to the identities \eqref{Eq:LoopCocycle}, \eqref{Eq:CommutativeCocycle} and \eqref{Eq:LeftAutomorphicCocycle}.

\subsection{Central extensions of $A$ by all factors of order $p^k$}

Let $F_1$, $\dots$, $F_m$ be a complete collection of left Bruck loops of order $p^k$ up to isomorphism. Then $\bigcup_{i=1}^m \mathcal Q^*_\B(F_i,A)$ contains all left Bruck loops of order $p^{k+1}$ up to isomorphism, but the union is not necessarily disjoint. To wit, when a constructed loop $Q$ of order $p^{k+1}$ possesses a center of order bigger than $p$, it might also posses two central subloops $Z_1$, $Z_2$ such that $Q/Z_1$, $Q/Z_2$ are not isomorphic.

Fortunately, it is not necessary to perform any additional isomorphism checks among loops of order $p^{k+1}$. Instead, suppose that we would like to decide if a loop $Q\in \mathcal Q^*_\B(F_i,A)$ of order $p^{k+1}$ has been seen before. We calculate the center $Z(Q)$ of $Q$. If $|Z(Q)|=p$ then $Q$ can only be obtained as an extension of $Z(Q)\cong\Z_p$ by $Q/Z(Q)\cong F_i$, and we keep $Q$. Otherwise, we calculate all central subloops $Z_1$, $\dots$, $Z_\ell$ of $Z(Q)$ of order $p$, and we calculate the factors $Q/Z_j$ for $1\le j\le \ell$. If, for some $1\le j\le \ell$, $Q/Z_j$ is isomorphic to $F_t$ with $t<i$, we discard $Q$ since it has already been seen in $\mathcal Q^*_\B(F_t,A)$; otherwise we keep $Q$.

Although this algorithm avoids isomorphism checks among loops of order $p^{k+1}$, it requires a large number of isomorphism checks among loops of order $p^k$ to identity the factor loops $Q/Z_j$. It took several days of computing time to perform this step of the algorithm for left Bruck loops of order $3^5$.

Altogether, the enumeration of left Bruck loops of order $3^k$ with $k\le 5$ took several months of computing time.

Similarly for commutative automorphic loops.

\section{Results}

Our results are summarized in Tables \ref{Tb:1}, \ref{Tb:2} and \ref{Tb:3}. We recall that involutory latin quandles are in one-to-one correspondence with uniquely $2$-divisible left Bruck loops (cf. Theorem \ref{Th:QBCorrespondence}).

\begin{table}[ht]
\begin{displaymath}
\begin{array}{|r|rrrl|}
    \hline
    \text{factor}   &B    &C_\B    &n_\B    &\text{comment}\\
    \hline
    3/1             &1      &2      &2      &\Z_3\\
    \hline
    9/1             &6      &10     &6      &\Z_3^2\\
    9/2             &7      &8      &2      &\Z_9\\
    \hline
    27/1            &23     &34     &47     &\Z_3^3\\
    27/2            &24     &28     &11     &\Z_3\times \Z_9\\
    27/3            &24     &27     &10     &14\text{ elements of order $3$}\\
    27/4            &24     &28     &13     &20\text{ elements of order $3$}\\
    27/5            &24     &27     &6      &2\text{ elements of order $3$}\\
    27/6            &24     &27     &10     &8\text{ elements of order $3$}\\
    27/7            &25     &26     &2      &\Z_{27}\\
    \hline
\end{array}
\end{displaymath}
\caption{The number $n_\B$ of left Bruck loops of order $3^{k+1}$ that are central extensions of $\Z_3$ by a given factor of order $3^k$, up to isomorphism.}\label{Tb:1}
\end{table}

Table \ref{Tb:1} lists all left Bruck loops $F$ of order $3$, $9$ and $27$. For each such loop $F$ of order $3^k$ we give the dimension $B$ of the vector space of coboundaries $B(F,\Z_3)$, the dimension $C_\B$ of the vector space $C_\B(F,\Z_3)$ of left Bruck loop cocycles, and the number $n_\B$ of left Bruck loops of order $3^{k+1}$ up to isomorphism that are central extensions of $\Z_3$ by $F$. The last column of Table \ref{Tb:1} contains structural information that uniquely identifies $F$, either as an abelian group or as a nonassociative left Bruck loop with a given number of elements of order $3$.

As an outcome of this classification, we have observed that given a left Bruck loop of order $3^k\le 81$, the associated $\Gamma$-loop (cf. Theorem \ref{Th:Greer}) is always a commutative automorphic loop.

\begin{table}[ht]
\begin{displaymath}
\begin{array}{|r|rrr|rr||r|rrr|rr|}
\hline
\text{factor}&B&C_\B&n_\B&C_\A&n_\A&\text{factor}&B&C_\B&n_\B&C_\A&n_\A\\
\hline
81/1&76&100&?&100&?&        81/37&77&87&4940&87&4940\\
81/2&77&88&162&88&162&      81/38&77&87&5018&87&5018\\
81/3&77&87&994&87&994&      81/39&77&87&9584&87&9584\\
81/4&77&88&634&88&634&      81/40&77&87&26882&87&26882\\
81/5&77&87&633&87&633&      81/41&77&87&26865&87&26865\\
81/6&77&87&979&87&979&      81/42&77&87&9582&87&9582\\
81/7&77&87&3865&87&3865&    81/43&77&87&9584&87&9584\\
81/8&77&87&7438&87&7438&    81/44&77&87&1169&87&1169\\
81/9&77&87&26913&87&26913&  81/45&77&87&2261&87&2261\\
81/10&77&87&14313&87&14313& 81/46&77&90&260&87&26\\
81/11&77&87&14231&87&14231& 81/47&77&87&189&87&189\\
81/12&77&87&14226&87&14226& 81/48&78&82&11&82&11\\
81/13&77&87&9630&87&9630&   81/49&78&83&17&82&13\\
81/14&77&87&26902&87&26902& 81/50&78&81&8&81&8\\
81/15&77&87&9584&87&9584&   81/51&78&82&6&82&6\\
81/16&77&87&26904&87&26904& 81/52&78&82&13&82&13\\
81/17&77&87&7332&87&7332&   81/53&78&82&7&82&7\\
81/18&77&87&26903&87&26903& 81/54&78&82&16&82&16\\
81/19&77&87&9624&87&9624&   81/55&78&81&10&81&10\\
81/20&77&87&26846&87&26846& 81/56&78&81&10&81&10\\
81/21&77&87&3708&87&3708&   81/57&78&81&8&81&8\\
81/22&77&87&9630&87&9630&   81/58&78&84&36&82&15\\
81/23&77&87&660&87&660&     81/59&78&82&12&81&10\\
81/24&77&87&9637&87&9637&   81/60&78&82&8&80&4\\
81/25&77&87&1759&87&1759&   81/61&78&80&3&80&3\\
81/26&77&87&1759&87&1759&   81/62&78&80&4&80&4\\
81/27&77&87&14228&87&14228& 81/63&78&81&10&81&10\\
81/28&77&87&4940&87&4940&   81/64&78&82&13&82&13\\
81/29&77&87&4986&87&4986&   81/65&78&81&6&81&6\\
81/30&77&87&14227&87&14227& 81/66&78&81&10&81&10\\
81/31&77&87&3822&87&3822&   81/67&78&82&11&81&9\\
81/32&77&87&14226&87&14226& 81/68&78&81&13&81&13\\
81/33&77&87&14239&87&14239& 81/69&78&81&13&81&13\\
81/34&77&87&4938&87&4938&   81/70&78&81&13&81&13\\
81/35&77&87&14218&87&14218& 81/71&78&82&3&80&2\\
81/36&77&87&1928&87&1928&   81/72&79&80&2&80&2\\
\hline
\end{array}
\end{displaymath}
\caption{The number of left Bruck loops $(n_\B)$ and commutative automorphic loops $(n_\A)$ of order $3^5$ that are central extensions of $\Z_3$ by a given factor of order $3^4$, up to isomorphism.}\label{Tb:2}
\end{table}

Table \ref{Tb:2} is similar to Table $1$ but for left Bruck loops of order $81$ used as factors. We were not able to complete the enumeration for the elementary abelian group of order $81$ (the loop $81/1$). There appear to be no compact invariants that could distinguish the $72$ left Bruck loops of order $81$, and we therefore do not give any structural information about the factors.

Unlike for orders $3^k\le 3^4$, there exists a left Bruck loop of order $3^5$ whose associated $\Gamma$-loop is not a commutative automorphic loop. (A multiplication table of this loop can be downloaded from the homepage of the second author.) We therefore report in Table \ref{Tb:2} also data for commutative automorphic loops. Given a left Bruck loop $F$ of order $3^4$, let $G$ be the associated commutative automorphic loop. In the row corresponding to $F$, we give the dimension $C_\A$ of the vector space $C_\A(G,\Z_3)$ of commutative automorphic loop cocycles, and the number $n_\A$ of commutative automorphic loops up to isomorphism that are central extensions of $\Z_3$ by $G$.

Note that for most but not all factors we have $C_\B=C_\A$ and $n_\B=n_\A$. The first difference occurs in the row indexed by the factor $81/46$.

\begin{table}[ht]
\begin{displaymath}
\begin{array}{|r|rrrrr|}
    \hline
    n       &3      &9      &27     &81     &243^*\\
    \hline
    n_\B     &1      &2      &7      &72     &118673^*\\
    n_\A     &1      &2      &7      &72     &118405^*\\
    \hline
\end{array}
\end{displaymath}
\caption{The number of left Bruck loops $(n_\B)$ and commutative automorphic loops $(n_\A)$ of orders $3$, $9$, $27$, $81$, $243$, up to isomorphism, excluding central extensions of $\Z_3$ by $\Z_3^4$.}\label{Tb:3}
\end{table}

Finally, Table \ref{Tb:3} gives the number $n_\B$ of left Bruck loops and the number $n_\A$ of commutative automorphic loops of order $n$ up to isomorphism. For $n=243$, we only give the number of left Bruck loops (resp. commutative automorphic loops) that are \emph{not} central extensions of $\Z_3$ by the elementary abelian group $\Z_3^4$.

It turns out that for each factor $F=81/i$ with $2\le i\le 47$ there is precisely one left Bruck loop up to isomorphism that is a central extension of $\Z_3$ by $\Z_3^4$ (namely the direct product $\Z_3\times F$). The resulting $46$ left Bruck loops are pairwise non-isomorphic and they are \emph{not} included in the count of Table \ref{Tb:3}. For the factors $F=81/i$ with $48\le i\le 72$, no central extension of $\Z_3$ by $F$ is also a central extension of $\Z_3$ by $\Z_3^4$. The situation is completely analogous for commutative automorphic loops.

Therefore, if $N_\B$ (resp. $N_\A$) is the number of left Bruck loops (resp. commutative automorphic loops) of order $243$ up to isomorphism that are central extensions of $\Z_3$ by $\Z_3^4$, then the number of left Bruck loops (resp. commutative automorphic loops) of order $243$ up to isomorphism is $N_\B+118673$ (resp. $N_\A+118405$).

\section{Open problems}

Let $p$ be an odd prime. In \cite{Greer}, Greer asked if the $\Gamma$-loops associated with left Bruck loops of order $p^3$ are always commutative automorphic loops. We can generalize his question as follows:

\begin{problem}
For which odd primes $p$ and positive integers $k$ is there a one-to-one correspondence between left Bruck loops of order $p^k$ and commutative automorphic loops of order $p^k$?
\end{problem}

By the results mentioned in the introduction, the answer is positive when $k\le 2$. Our results imply that the answer is positive for $p^k\in\{3^3,3^4\}$ and negative for $p^k=3^5$. We have also verified that the answer is positive for $p^k\in\{5^3,7^3,11^3\}$, the case $11^3$ taking several days of computing time to complete.

\begin{problem}
Let $p$ be an odd prime and $k$ a positive integer. Is there an abstract description of left Bruck loops of order $p^k$ for which all associated $\Gamma$-loops are commutative automorphic loops?
\end{problem}

\section*{Acknowledgment}

We thank David Stanovsk\'y for bringing the correspondence between involutory latin quandles and uniquely $2$-divisible left Bruck loops to our attention. We also thank an anonymous referee for several comments that improved the manuscript. The calculations took place on the high performance computing cluster of the University of Denver---we thank Benjamin Fotovich for providing assistance with the computing cluster.

\bibliographystyle{amsplain}

\begin{thebibliography}{99}

\bibitem{Bruck}
Richard Hubert Bruck, \emph{A survey of binary systems}, Ergebnisse der Mathematik und ihrer Grenzgebiete,  Neue Folge, Heft \textbf{20}, Springer Verlag, Berlin-G\"ottingen-Heidelberg 1958.

\bibitem{BP}
R.H. Bruck and Lowell J. Paige, \emph{Loops whose inner mappings are automorphisms}, Ann. of Math. (\textbf{2}) \textbf{63} (1956), 308--323.

\bibitem{Burn}
R.P. Burn, \emph{Finite Bol loops}, Math. Proc. Cambridge Philos. Soc. \textbf{84} (1978), no. \textbf{3}, 377--385.

\bibitem{Csorgo}
Piroska Cs\"{o}rg\H{o}, \emph{All automorphic loops of order $p^2$ for some prime $p$ are associative}, J. Algebra Appl. \textbf{12} (2013), no. \textbf{6}, 1350013, 8 pp.

\bibitem{DGV}
Dylene Agda Souza De Barros, Alexander Grishkov and Petr Vojt\v{e}chovsk\'y, \emph{Commutative automorphic loops of order $p^3$}, J. Algebra Appl. \textbf{11} (2012), no. \textbf{5}, 1250100, 15 pp.

\bibitem{Drinfeld}
V.G. Drinfeld, \emph{On some unsolved problems in quantum group theory}, Quantum groups (Leningrad, 1990), 1--8, Lecture Notes in Math. \textbf{1510}, Springer, Berlin, 1992.

\bibitem{EN}
Mohamed Elhamdadi and Sam Nelson, \emph{Quandles-an introduction to the algebra of knots}, Student Mathematical Library \textbf{74}, American Mathematical Society, Providence, RI, 2015.

\bibitem{Eisermann}
Michael Eisermann, \emph{Yang-Baxter deformations of quandles and racks}, Algebr. Geom. Topol. \textbf{5} (2005), 537--562.

\bibitem{Joyce}
David Joyce, \emph{A classifying invariant of knots, the knot quandle}, J. Pure Appl. Algebra \textbf{23} (1982), no. \textbf{1}, 37--65.

\bibitem{Galkin}
V.M. Galkin, \emph{Left distributive finite order quasigroups} (Russian), Quasigroups and loops. Mat. Issled. No. \textbf{51} (1979), 43--54, 163.

\bibitem{GAP}
The GAP~Group, \emph{GAP -- Groups, Algorithms, and Programming, Version 4.8.8}; 2017, \verb+(https://www.gap-system.org)+.

\bibitem{Glauberman1}
George Glauberman, \emph{On loops of odd order}, J. Algebra \textbf{1} (1964), 374--396.

\bibitem{Glauberman2}
George Glauberman, \emph{On loops of odd order II}, J. Algebra \textbf{8} (1968), 393--414.

\bibitem{Greer}
Mark Greer, \emph{A class of loops categorically isomorphic to Bruck loops of odd order}, Comm. Algebra \textbf{42} (2014), no. \textbf{8}, 3682--3697.

\bibitem{GKN}
Alexander Grishkov, Michael Kinyon and G\'abor P.~Nagy, \emph{Solvability of commutative automorphic loops}, Proc. Amer. Math. Soc. \textbf{142} (2014), no. \textbf{9}, 3029--3037.

\bibitem{HSV}
Alexander Hulpke, David Stanovsk\'y and Petr Vojt\v{e}chovsk\'y, \emph{Connected quandles and transitive groups}, J. Pure Appl. Algebra \textbf{220} (2016), no. \textbf{2}, 735--758.

\bibitem{JKVConst}
P\v{r}emysl Jedli\v{c}ka, Michael Kinyon and Petr Vojt\v{e}chovsk\'y, \emph{Constructions of commutative automorphic loops}, Comm. Algebra \textbf{38} (2010), no. \textbf{9}, 3243--3267.

\bibitem{JKVNilp}
P\v{r}emysl Jedli\v{c}ka, Michael Kinyon and Petr Vojt\v{e}chovsk\'y, \emph{Nilpotency in automorphic loops of prime power order}, J. Algebra \textbf{350} (2012), 64--76.

\bibitem{JKVStruct}
P\v{r}emysl Jedli\v{c}ka, Michael Kinyon and Petr Vojt\v{e}chovsk\'y, \emph{The structure of commutative automorphic loops}, Trans. Amer. Math. Soc. \textbf{363} (2011), no. \textbf{1}, 365--384.

\bibitem{Kiechle}
Hubert Kiechle, \emph{Theory of K-loops}, Lecture Notes in Mathematics \textbf{1778}, Springer-Verlag, Berlin, 2002.

\bibitem{Kikkawa}
Michihiko Kikkawa, \emph{On some quasigroups of algebraic models of symmetric spaces}, Mem. Fac. Lit. Sci. Shimane Univ. Natur. Sci. No. \textbf{6} (1973), 9--13.

\bibitem{KKPV}
Michael Kinyon, Kenneth Kunen, J.D. Phillips and Petr Vojt\v{e}chovsk\'y, \emph{The structure of automorphic loops}, Trans. Amer. Math. Soc. \textbf{368} (2016), no. \textbf{12}, 8901--8927.

\bibitem{Matveev}
S.V. Matveev, \emph{Distributive groupoids in knot theory} (Russian), Mat. Sb. (N.S.) \textbf{119}(\textbf{161}) (1982), no. \textbf{1}, 78--88.

\bibitem{NV64}
G\'abor P. Nagy and Petr Vojt\v{e}chovsk\'y, \emph{The Moufang loops of order $64$ and $81$}, J. Symbolic Comput. \textbf{42} (2007), no. \textbf{9}, 871--883.

\bibitem{LOOPS}
G\'abor P. Nagy and Petr Vojt\v{e}chovsk\'y, \emph{LOOPS: Computing with quasigroups and loops in GAP}, available at \verb+http://www.math.du.edu/~petr/loops+

\bibitem{Pflugfelder}
Hala O. Pflugfelder, \emph{Quasigroups and loops: introduction}, Sigma Series in Pure Mathematics \textbf{7}, Heldermann Verlag, Berlin, 1990.

\bibitem{Robinson}
D.A. Robinson, \emph{A loop-theoretic study of right-sided quasigroups}, Ann. Soc. Sci. Bruxelles Sér. I \textbf{93} (1979), no. \textbf{1}, 7--16.

\bibitem{Stanovsky}
David Stanovsk\'y, \emph{A guide to self-distributive quasigroups, or latin quandles}, Quasigroups and Related Systems \textbf{23} (2015), no. \textbf{1}, 91--128.

\bibitem{Ungar}
Abraham A. Ungar, \emph{Beyond the Einstein addition law and its gyroscopic Thomas precession. The theory of gyrogroups and gyrovector spaces.}, Fundamental Theories of Physics \textbf{117}, Kluwer Academic Publishers Group, Dordrecht, 2001.

\end{thebibliography}

\end{document}